\documentclass[12pt,reqno]{amsart}

\setlength{\textheight}{23cm}
\setlength{\textwidth}{16cm}
\setlength{\topmargin}{-0.8cm}
\setlength{\parskip}{0.3\baselineskip}
\hoffset=-1.4cm

\newtheorem{theorem}{Theorem}[section]
\newtheorem{proposition}[theorem]{Proposition}

\newtheorem{lemma}[theorem]{Lemma}
\newtheorem{corollary}[theorem]{Corollary}
\newtheorem{remark}[theorem]{Remark}

\numberwithin{equation}{section}

\baselineskip=16pt

\begin{document}

\title[Yang-Mills-Higgs connections on Calabi-Yau manifolds]{Yang-Mills-Higgs
connections\\[5pt] on Calabi-Yau manifolds}

\author[I. Biswas]{Indranil Biswas}

\address{School of Mathematics, Tata Institute of Fundamental
Research, Homi Bhabha Road, Bombay 400005, India}

\email{indranil@math.tifr.res.in}

\author[U. Bruzzo]{Ugo Bruzzo}

\address{Departamento de Matem\'atica, Universidade Federal de Santa Catarina, 
  Campus Universit\'ario Trindade,
CEP 88.040-900 Florian\'opolis-SC, Brazil; Istituto Nazionale di Fisica Nucleare, Sezione di Trieste, Italy. On leave from Scuola Internazionale Superiore di Studi Avanzati, Trieste, Italy}

\email{bruzzo@sissa.it}

\author[B. Gra\~na Otero]{\\[4pt] Beatriz Gra\~na Otero}

\address{Departamento de Matem\'aticas, Pontificia Universidad Javeriana,
Cra. 7$^{\rm ma}$ N$^{\rm o}$ 40-62, Bogot\'a, Colombia}

\email{bgrana@javeriana.edu.co}

\author[A. Lo Giudice]{Alessio Lo Giudice}

\address{IMECC - UNICAMP, Departamento de Matem\'atica, Rua S\'ergio Buarque de Holanda,
651, Cidade Universit\'aria, 13083-859 Campinas--SP, Brazil}

\email{logiudice@ima.unicamp.br}

\subjclass[2010]{14F05, 14J32, 32L05, 53C07, 58E15}

\keywords{Calabi-Yau manifold, approximate Hermitian-Yang-Mills structures, Hermitian-Yang-Mills metrics, polystability, 
Higgs field.}

\date{}

\begin{abstract}
Let $X$ be a compact connected K\"ahler--Einstein manifold with $c_1(TX)\, \geq\, 0$. If there is a semistable Higgs vector bundle $(E\, ,\theta)$ on $X$ with $\theta\,\not=\,
0$, then we show that $c_1(TX)=0$; any $X$ satisfying this condition is called a
Calabi--Yau manifold, and it admits a Ricci--flat K\"ahler form \cite{Ya}. Let $(E\, ,\theta)$
be a polystable Higgs vector bundle on a compact Ricci--flat K\"ahler manifold $X$.
Let $h$ be an Hermitian structure on $E$ satisfying the Yang--Mills--Higgs equation
for $(E\, ,\theta)$. We prove that $h$ also satisfies the Yang--Mills--Higgs equation
for $(E\, ,0)$. A similar result is proved for Hermitian structures on principal Higgs
bundles on $X$ satisfying the Yang--Mills--Higgs equation.
\end{abstract}

\maketitle

\section{Introduction}\label{sec1}

Let $X$ be a compact connected K\"ahler--Einstein manifold with $c_1(TX)\, \geq\, 0$.
A Higgs vector bundle on $X$ is a holomorphic vector bundle $E$
on $X$ equipped with a holomorphic section $\theta$ of
$\text{End}(E)\bigotimes\Omega_X$ such that $\theta\bigwedge\theta\,=\, 0$.
The definition of semistable and polystable Higgs vector bundles is recalled in Section \ref{sec2}.
We prove that if there is a semistable Higgs vector bundle $(E\, ,\theta)$ on $X$ with
$\theta\,\not=\, 0$, then $c_1(TX)\,=\,0$ (see Proposition \ref{cor1}).

Let $X$ be a compact connected Calabi--Yau manifold, which means that $X$ is a
K\"ahler manifold with $c_1(TX)\,=\, 0$. Fix a Ricci--flat K\"ahler form on
$X$ \cite{Ya}. Let $(E\, ,\theta)$ be a polystable Higgs vector bundle
on $X$. Then there is a Hermitian structure on $E$ that satisfies the Yang--Mills--Higgs
equation for $(E\, ,\theta)$ (this equation is recalled in Section \ref{sec2}). Fix a
Hermitian structure $h$ on $E$ satisfying the Yang--Mills--Higgs
equation for $(E\, ,\theta)$. 

Our main theorem (Theorem \ref{thm1}) says that $h$ also satisfies the Yang--Mills--Higgs
equation for $(E\, ,0)$. 

We give an example to show that if a Hermitian structure
$h_0$ on $E$ satisfies the Yang--Mills--Higgs equation
for $(E\, ,0)$, then $h_0$ does not satisfy the Yang--Mills--Higgs equation for
a general polystable Higgs vector bundle of the
form $(E\, ,\theta)$ (see Remark \ref{rem1}). In Remark
\ref{rem2} we describe how a Yang--Mills--Higgs Hermitian structure for $(E\, ,\theta)$
can be constructed from a Yang--Mills--Higgs Hermitian structure for $(E\, ,0)$.

Theorem \ref{thm1} extends to the more general context of
principal $G$--bundles on $X$ with a Higgs structure,
where $G$ is a connected reductive affine algebraic group defined over $\mathbb C$; this
is carried out in Section \ref{se4}.

\bigskip
\section{Higgs field on a K\"ahler-Einstein manifold}\label{sec2}

We recall that a K\"ahler metric is called \textit{K\"ahler--Einstein} if
its Ricci curvature is a constant real multiple of the K\"ahler form.
Let $X$ be a compact connected K\"ahler manifold admitting a K\"ahler--Einstein
metric. We assume
that $c_1(TX)\, \geq\, 0$; this is equivalent to the condition that the above
mentioned scalar factor is nonnegative. Fix a K\"ahler--Einstein form $\omega$ on $X$.
The cohomology class in
$H^2(X,\, {\mathbb R})$ given by $\omega$ will be denoted by $\widetilde{\omega}$.

Define the \textit{degree} of a torsionfree coherent analytic sheaf $F$ on $X$ to be
$$
\text{degree}(F)\, :=\, (c_1(F)\cup \widetilde{\omega}^{d-1})\cap [X]\, \in\,
\mathbb R\, ,
$$
where $d$ is the complex dimension of $X$. Throughout this paper, stability will be with
respect to this definition of degree.

The holomorphic cotangent bundle of $X$ will be denoted by $\Omega_X$. A
\textit{Higgs field} on a holomorphic vector bundle $E$ on $X$ is a holomorphic
section $\theta$ of ${\rm End}(E)\bigotimes \Omega_X\,=\, (E \bigotimes\Omega_X)\bigotimes
E^*$ such that
\begin{equation}\label{e1}
\theta\bigwedge\theta\,=\, 0\, .
\end{equation}
A \textit{Higgs vector bundle} on $X$ is
a pair of the form $(E\, ,\theta)$, where $E$ is a holomorphic vector bundle on $X$
and $\theta$ is a Higgs field on $E$.

A Higgs vector bundle $(E\, ,\theta)$ is called \textit{stable} (respectively,
\textit{semistable}) if for all nonzero coherent analytic subsheaves $F\, \subset\, E$
with $0\, <\,\text{rank}(F)\, <\, \text{rank}(E)$ and $\theta(F)\, \subseteq\,
F \bigotimes\Omega_X$, we have
$$
\frac{{\rm degree}(F)}{{\rm rank}(F)}\, <\, \frac{{\rm degree}(E)}{{\rm rank}(E)}
\ \ {\rm (respectively,\,}~ \frac{{\rm degree}(F)}{{\rm rank}(F)}\, \leq
\, \frac{{\rm degree}(E)}{{\rm rank}(E)}{\rm )}\, .
$$
A semistable Higgs vector bundle $(E\, ,\theta)$ is called \textit{polystable}
if it is a direct sum of stable Higgs vector bundles.

Let $\Lambda_\omega$ denote the adjoint of multiplication of differential
forms on $X$ by $\omega$. In particular, $\Lambda_\omega$ sends a $(p\, ,q)$--form
on $X$ to a $(p-1\, ,q-1)$--form.
Given a Higgs vector bundle $(E\, ,\theta)$ on $X$, the \textit{Yang--Mills--Higgs}
equation for the Hermitian structures $h$ on $E$ states that
\begin{equation}\label{ymh}
\Lambda_\omega ({\mathcal K}_h +\theta\wedge\theta^*)\,=\, c\sqrt{-1}\cdot \text{Id}_E\, ,
\end{equation}
where ${\mathcal K}_h\, \in\, C^\infty(X,\, {\rm End}(E)\bigotimes{\Omega}^{1,1}_X)$
is the curvature of the Chern connection on $E$ for $h$, the adjoint $\theta^*$ of
$\theta$ is with respect to $h$, and $c$ is a constant
scalar (it lies in $\mathbb R$). A Hermitian structure on $E$
is called Yang--Mills--Higgs for $(E\, ,\theta)$ if it satisfies the
equation in \eqref{ymh}.

\begin{proposition}\label{cor1}
If there is a semistable Higgs bundle $(E\, ,\theta)$ on $X$ such that
$\theta\, \not=\, 0$, then $c_1(TX)\,=\, 0$.
\end{proposition}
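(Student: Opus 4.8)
The plan is to turn the K\"ahler--Einstein hypothesis into polystability of $\Omega_X$ and then extract a numerical inequality from the mere existence of a nonzero Higgs field. First I would observe that the K\"ahler--Einstein metric $\omega$ is itself a Hermitian--Yang--Mills metric on $TX$: the identity $\text{Ric}(\omega)=\lambda\,\omega$, where $\lambda\geq 0$ encodes the hypothesis $c_1(TX)\geq 0$, says that the Chern connection of $\omega$ on $TX$ satisfies $\Lambda_\omega\mathcal{K}=\sqrt{-1}\,\lambda\cdot\text{Id}$. By the Kobayashi--Hitchin correspondence this forces $TX$, and hence its dual $\Omega_X$, to be polystable; in particular $(\Omega_X\,,0)$ is a semistable Higgs bundle. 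The same identity gives that $\text{degree}(TX)$ is a positive multiple of $\lambda$, so it is nonnegative and vanishes precisely when $\lambda=0$, i.e. precisely when $c_1(TX)=0$. The whole problem therefore reduces to establishing the reverse inequality $\text{degree}(TX)\leq 0$, equivalently $\text{degree}(\Omega_X)\geq 0$.

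The conceptual heart of the argument is that $\theta$ is itself a morphism in the category of Higgs sheaves. Endow $E\bigotimes\Omega_X$ with the Higgs field $\theta\otimes\text{Id}_{\Omega_X}$, which squares to zero because $\theta\wedge\theta=0$. Then
\[
\theta\,\colon\,(E\,,\theta)\,\longrightarrow\,(E\bigotimes\Omega_X\,,\,\theta\otimes\text{Id}_{\Omega_X})
\]
is a homomorphism of Higgs sheaves, since the compatibility condition reduces to the tautological equality of the two composites $(\theta\otimes\text{Id}_{\Omega_X})\circ\theta$. As $\theta\neq 0$, this is a nonzero Higgs morphism.

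Next I would run the slope comparison. Using the Higgs analogue of the theorem that a tensor product of semistable objects on a compact K\"ahler manifold is semistable, the bundle $(E\bigotimes\Omega_X\,,\,\theta\otimes\text{Id}_{\Omega_X})$ is a semistable Higgs bundle, because $(E\,,\theta)$ is semistable and $(\Omega_X\,,0)$ is polystable. A nonzero homomorphism of semistable Higgs sheaves cannot lower the slope: its image is simultaneously a Higgs quotient of the source and a Higgs subsheaf of the target, so its slope lies between $\mu(E)$ and $\mu(E\bigotimes\Omega_X)$. Applied to the morphism above this gives
\[
\mu(E)\,\leq\,\mu(E\bigotimes\Omega_X)\,=\,\mu(E)+\frac{\text{degree}(\Omega_X)}{d}\, ,
\]
whence $\text{degree}(\Omega_X)\geq 0$. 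Combined with the first paragraph this forces $\text{degree}(TX)=0$, hence $\lambda=0$, hence $c_1(TX)=0$.

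The step I expect to be the genuine obstacle is the tensor-product semistability invoked above: in contrast to the formal morphism computation, it is not automatic and must be supplied by the Higgs Kobayashi--Hitchin machinery. Concretely I would argue that a semistable Higgs bundle on a compact K\"ahler manifold carries an approximate Hermitian--Yang--Mills structure, that tensoring such a structure with the genuine Hermitian--Yang--Mills metric on the polystable factor $\Omega_X$ again yields an approximate Hermitian--Yang--Mills structure, and that the existence of an approximate Hermitian--Yang--Mills structure implies semistability. Once this input is granted, the polystability of $\Omega_X$, the tautological Higgs-morphism property of $\theta$, and the final numerical squeeze are all routine.
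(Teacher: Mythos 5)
Your argument is correct and is, at its core, the same as the paper's: both proofs hinge on (i) polystability of $\Omega_X$ coming from the K\"ahler--Einstein condition, (ii) semistability of a tensor product of semistable Higgs bundles, and (iii) a slope inequality extracted from a nonzero Higgs morphism built out of $\theta$, yielding ${\rm degree}(\Omega_X)\,\geq\,0$ and hence $c_1(TX)\,=\,0$. The packaging differs in two minor but instructive ways. First, the paper regards $\theta$ as a nonzero morphism $({\mathcal O}_X\, ,0)\,\to\,({\rm End}(E)\otimes\Omega_X\, ,\theta')$, so the source has slope $0$ and $c_1({\rm End}(E))\,=\,0$ makes the target's slope equal to $\mu(\Omega_X)$ on the nose; you instead use $\theta\,\colon\,(E\, ,\theta)\,\to\,(E\otimes\Omega_X\, ,\theta\otimes{\rm Id}_{\Omega_X})$ and squeeze the slope of the image between $\mu(E)$ and $\mu(E)+\mu(\Omega_X)$ --- equivalent, marginally less clean. (Note also that the Higgs-morphism compatibility for your map is not tautological: it is precisely the commutativity $\theta_i\circ\theta_j=\theta_j\circ\theta_i$, i.e.\ $\theta\wedge\theta=0$ again.) Second, and more substantively, for the tensor-product semistability you propose the approximate Hermitian--Yang--Mills route valid on arbitrary compact K\"ahler manifolds, correctly identifying it as the nontrivial input; the paper avoids this heavier machinery by observing that under the contradiction hypothesis $c_1(TX)\,\neq\,0$ the manifold is Fano, hence projective with $[\omega]$ a multiple of the ample class $c_1(TX)$, so Simpson's projective tensor-product theorem \cite[Cor.~3.8]{Si2} applies directly. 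Your route buys generality at the cost of a deeper analytic theorem; the paper's buys economy via a reduction you missed but do not need.
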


\begin{proof}
The Higgs field $\theta$ on $E$ induces a Higgs field on ${\rm End}(E)$, which we
will denote by $\widehat{\theta}$. We recall that for any locally defined holomorphic
sections $s$ of ${\rm End}(E)$,
$$
\widehat{\theta}(s) \,=\, [\theta\, ,s]\, .
$$
Let
\begin{equation}\label{tp}
\theta'\, =\, \widehat{\theta}\otimes \text{Id}_{\Omega_X}.
\end{equation}
This  is a
 Higgs field for ${\rm End}(E)\bigotimes\Omega_X$.  We note that the integrability
condition in \eqref{e1} implies that $\theta'(\theta)\,=\, 0$.

Assume that $(E\, ,\theta)$ is semistable with $\theta\, \not=\, 0$, and also
assume that $c_1(TX)\,\not=\, 0$. Since $(X\, ,\omega)$ is K\"ahler--Einstein
with $c_1(TX)\, \geq\, 0$, the condition $c_1(TX)\,\not=\, 0$ implies that
the anti-canonical line bundle $\bigwedge^d TX$ is positive, so $X$ is a complex
projective manifold. Also, the cohomology class of $\omega$ is a positive multiple
of the ample class $c_1(TX)$.

We shall use the fact that the tensor product of semistable Higgs bundles
on a polarized complex projective manifold, with the
induced Higgs field, is semistable \cite[Cor.~3.8]{Si2}. 
Thus,  $({\rm End}(E),\widehat\theta)$ is semistable. Moreover, since 
$\omega$ is K\"ahler--Einstein, $\Omega_X$ is a polystable vector bundle, in particular
it is semistable. Then $(\Omega_X,0)$ is a semistable Higgs bundle. As a result,
the Higgs bundle $({\rm End}(E)\bigotimes\Omega_X,\theta')$ is semistable.

The homomorphism
$$
{\mathcal O}_X\, \longrightarrow\,\, {\rm End}(E)\otimes\Omega_X\, ,~ \
f\longmapsto\, f\theta
$$
defines a homomorphism of Higgs vector bundles
\begin{equation}\label{e2}
\varphi\, :\, ({\mathcal O}_X\, ,0)\,\longrightarrow\, ({\rm End}(E)
\otimes\Omega_X\, ,\theta')\, .
\end{equation}

As $\theta\,\not=\, 0$,
the homomorphism $\varphi$ in \eqref{e2} is nonzero. Since $({\rm End}(E) \otimes\Omega_X\, ,\theta')$ is semistable,  we have
\begin{equation}\label{e4}
0\,=\, \frac{{\rm degree}({\mathcal O}_X)}{{\rm rank}({\mathcal O}_X)}\,=\,
\frac{{\rm degree}(\varphi({\mathcal O}_X))}{{\rm rank}(\varphi({\mathcal O}_X))}
\, \leq\, \frac{{\rm degree}({\rm End}(E)
\otimes\Omega_X)}{{\rm rank}({\rm End}(E)\otimes\Omega_X)}\,=\,
\frac{{\rm degree}(\Omega_X)}{{\rm rank}(\Omega_X)}\, ;
\end{equation}
the last equality follows from the fact that $c_1({\rm End}(E))\,=\, 0$. Therefore,
\begin{equation}\label{e3}
{\rm degree}(\Omega_X)\, \geq\, 0\, .
\end{equation}
Recall that $c_1(TX)\,\geq\, 0$ and $X$ admits a K\"ahler--Einstein metric. So,
\eqref{e3} contradicts the assumption that $c_1(TX)\,\not=\, 0$. Therefore,
we conclude that
\begin{equation}\label{e5}
c_1(TX)\,=\, 0\, .
\end{equation}
Consequently, $\omega$ is Ricci--flat, in particular, $X$ is a Calabi--Yau manifold.
\end{proof}

A well-known theorem due to Simpson says that $E$ admits an Hermitian structure that 
satisfies the Yang--Mills--Higgs equation for $(E\, ,\theta)$ if and only if $(E\, 
,\theta)$ is polystable \cite[Thm.~1]{Si1} (see also \cite{Si2}); when $X$ is a 
compact Riemann surface and $\text{rank}(E)\,=\, 2$, this was first proved in 
\cite{Hi}.
 
The Chern connection on $E$ for $h$ will be denoted by $\nabla^h$. Let
$\widehat{\nabla}^h$ denote the connection on ${\rm End}(E)\,=\,
E\bigotimes E^*$ induced by $\nabla^h$.
The Levi--Civita connection on $\Omega_X$ associated to $\omega$ and the connection
$\widehat{\nabla}^h$ on ${\rm End}(E)$ together
produce a connection on ${\rm End}(E)\bigotimes
\Omega_X$. This connection on ${\rm End}(E)\bigotimes\Omega_X$ will be denoted
by $\nabla^{\omega,h}$.

\begin{proposition}\label{prop1}
Assume that the Hermitian structure $h$ satisfies the Yang--Mills--Higgs
equation in \eqref{ymh} for $(E\, ,\theta)$.
Then the section $\theta$ of ${\rm End}(E)\bigotimes\Omega_X$ is flat (meaning
covariantly constant) with respect to the connection $\nabla^{\omega,h}$ constructed
above.
\end{proposition}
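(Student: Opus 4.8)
The plan is to split $\nabla^{\omega,h}\theta$ into its $(0,1)$-- and $(1,0)$--parts and to show that each vanishes. Since $(X,\omega)$ is K\"ahler, the Levi--Civita connection on $\Omega_X$ agrees with the Chern connection of the Hermitian holomorphic bundle $\Omega_X$, while $\widehat\nabla^h$ is the Chern connection of $\text{End}(E)$; hence $\nabla^{\omega,h}$ is the Chern connection of the tensor product Hermitian structure on $\text{End}(E)\otimes\Omega_X$. In particular its $(0,1)$--part is the Dolbeault operator of this holomorphic bundle, so $(\nabla^{\omega,h})^{0,1}\theta=\bar\partial\theta=0$ because $\theta$ is holomorphic. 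It therefore remains to prove that $(\nabla^{\omega,h})^{1,0}\theta=0$.

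For this I would invoke the Bochner--Kodaira identity for a holomorphic section of a Hermitian holomorphic bundle. Writing $F^{\omega,h}$ for the curvature of $\nabla^{\omega,h}$, the pointwise identity reads $\sqrt{-1}\,\Lambda_\omega\partial\bar\partial|\theta|^2 = |(\nabla^{\omega,h})^{1,0}\theta|^2 - \langle\sqrt{-1}\,\Lambda_\omega F^{\omega,h}\theta,\theta\rangle$, whose left side integrates to zero over the compact manifold $X$ (it is $\sqrt{-1}\,\partial\bar\partial$ of a function, paired against $\omega^{d-1}$). Integration then gives
\[
\int_X |(\nabla^{\omega,h})^{1,0}\theta|^2\,\omega^{d} \;=\; \int_X \langle\sqrt{-1}\,\Lambda_\omega F^{\omega,h}\theta,\theta\rangle\,\omega^{d},
\]
so it suffices to show the integrand on the right is nonpositive. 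To evaluate it, note that $F^{\omega,h}$ is the sum of the curvature of $\widehat\nabla^h$ on $\text{End}(E)$ (acting on $s$ by $s\mapsto[\mathcal{K}_h,s]$) and the Levi--Civita curvature of $\Omega_X$. Because $\theta\neq 0$ forces $c_1(TX)=0$ by Proposition \ref{cor1}, the form $\omega$ is Ricci--flat, so the contraction $\sqrt{-1}\,\Lambda_\omega$ of the $\Omega_X$--curvature is the Ricci endomorphism and vanishes. Hence $\sqrt{-1}\,\Lambda_\omega F^{\omega,h}\theta=[\sqrt{-1}\,\Lambda_\omega\mathcal{K}_h,\theta]$. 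The Yang--Mills--Higgs equation \eqref{ymh} gives $\sqrt{-1}\,\Lambda_\omega\mathcal{K}_h=-c\,\text{Id}_E-\Psi$ with $\Psi:=\sqrt{-1}\,\Lambda_\omega(\theta\wedge\theta^*)$; since the scalar $-c\,\text{Id}_E$ is central it drops out of the commutator, so the right--hand integrand equals $-\langle[\Psi,\theta],\theta\rangle$.

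The main obstacle is to prove the pointwise inequality $\langle[\Psi,\theta],\theta\rangle\geq 0$, and this is exactly where the Higgs integrability \eqref{e1} is used. In a local unitary frame for $E$ and an $\omega$--orthonormal coframe $\{dz^\alpha\}$, write $\theta=\sum_\alpha\theta_\alpha\,dz^\alpha$; then $\Psi=\sum_\alpha\theta_\alpha\theta_\alpha^*$, and with $\Psi':=\sum_\alpha\theta_\alpha^*\theta_\alpha$ a short trace computation (using cyclicity of the trace) gives $\langle[\Psi,\theta],\theta\rangle=\text{tr}(\Psi^2)-\text{tr}(\Psi\Psi')$. The condition $\theta\wedge\theta=0$ says precisely that the endomorphisms $\theta_\alpha$ commute; simultaneously triangularizing them shows $\text{tr}(\Psi^2)=\text{tr}(\Psi'^2)$, and therefore
\[
\langle[\Psi,\theta],\theta\rangle=\tfrac12\,\text{tr}\big((\Psi-\Psi')^2\big)=\tfrac12\,\|\Psi-\Psi'\|^2\geq 0.
\]
Without integrability the terms $[\theta_\alpha,\theta_\beta]$ would contribute an extra nonnegative amount to $\text{tr}(\Psi'^2)$ and this equality would fail, so the hypothesis is genuinely essential; the trace identity $\text{tr}(\Psi^2)=\text{tr}(\Psi'^2)$ for commuting $\theta_\alpha$ is the crux of the whole argument.

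Putting these together, the right--hand integrand $-\langle[\Psi,\theta],\theta\rangle$ is nonpositive while the left--hand integrand $|(\nabla^{\omega,h})^{1,0}\theta|^2$ is nonnegative, so both must vanish identically. Thus $(\nabla^{\omega,h})^{1,0}\theta=0$, and combined with $(\nabla^{\omega,h})^{0,1}\theta=0$ this yields $\nabla^{\omega,h}\theta=0$, which is the assertion. The only step demanding real care is the linear--algebra identity $\text{tr}(\Psi^2)=\text{tr}(\Psi'^2)$ for a commuting family, from which the manifest nonnegativity follows.
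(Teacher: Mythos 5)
Your overall strategy---a Bochner--Kodaira identity for the holomorphic section $\theta$ of ${\rm End}(E)\otimes\Omega_X$, reducing the claim to a pointwise sign condition on the curvature term---is genuinely different from the paper's. The paper never computes curvature pointwise: it feeds the equality of slopes in \eqref{e4} into Simpson's structure theorem \cite[Prop.~3.3]{Si1} applied to the morphism $\varphi\colon(\mathcal{O}_X,0)\to({\rm End}(E)\otimes\Omega_X,\theta')$, so that $\varphi(\mathcal{O}_X)$ becomes a holomorphic direct summand carrying a Yang--Mills--Higgs metric of slope zero, whence its holomorphic sections (in particular $\theta$) are parallel. Your reduction down to $\langle[\Psi,\theta],\theta\rangle=\operatorname{tr}(\Psi^2)-\operatorname{tr}(\Psi\Psi')$ is correct (more generally, $\langle[M,\theta],\theta\rangle=\operatorname{tr}\bigl(M(\Psi-\Psi')\bigr)$ for any Hermitian $M$), and your handling of the $\Omega_X$--curvature via Ricci--flatness and Proposition \ref{cor1} is fine.

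The step you yourself single out as the crux, however, is false: for a commuting family $\{\theta_\alpha\}$ one does \emph{not} have $\operatorname{tr}(\Psi^2)=\operatorname{tr}(\Psi'^2)$. Take $3\times 3$ matrix units $\theta_1=E_{12}$ and $\theta_2=E_{13}$ (all products $\theta_\alpha\theta_\beta$ vanish, so they commute and the integrability condition holds); then $\Psi=\theta_1\theta_1^*+\theta_2\theta_2^*=2E_{11}$ while $\Psi'=\theta_1^*\theta_1+\theta_2^*\theta_2=E_{22}+E_{33}$, so $\operatorname{tr}(\Psi^2)=4$ but $\operatorname{tr}(\Psi'^2)=2$. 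Simultaneous triangularization gives no symmetry between $\theta_\alpha\theta_\alpha^*$ and $\theta_\alpha^*\theta_\alpha$. Consequently the chain $\operatorname{tr}(\Psi^2)-\operatorname{tr}(\Psi\Psi')=\tfrac12\operatorname{tr}\bigl((\Psi-\Psi')^2\bigr)$ collapses, and the pointwise inequality $\operatorname{tr}\bigl(\Psi(\Psi-\Psi')\bigr)\ge 0$ that your integration argument requires is left unproved. The natural repair is to use the Hitchin--Simpson form of \eqref{ymh}, in which the Higgs contribution to the curvature is the full commutator $[\theta,\theta^*]=\theta\wedge\theta^*+\theta^*\wedge\theta$, i.e.\ to take $M=\Psi-\Psi'$ rather than $M=\Psi$ in the identity above: then $\langle[M,\theta],\theta\rangle=\operatorname{tr}\bigl((\Psi-\Psi')^2\bigr)=\|\Psi-\Psi'\|^2\ge 0$ with no commutativity or triangularization needed, and the Bochner argument closes (also yielding $\Lambda_\omega[\theta,\theta^*]=0$, which is the content of Lemma \ref{lem3}). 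As written, though, the proof has a genuine gap at its central inequality.
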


\begin{proof}
The Hermitian structure $h$ on $E$ produces an Hermitian structure on 
${\rm End}(E)$, which will be denoted by $\widehat{h}$. The connection
$\widehat{\nabla}^h$ on ${\rm End}(E)$ defined earlier is in fact the Chern connection
for $\widehat{h}$. The K\"ahler form $\omega$ and the Hermitian structure
$\widehat{h}$ together produce an Hermitian
structure on ${\rm End}(E)\bigotimes\Omega_X$. This Hermitian structure on
${\rm End}(E)\bigotimes\Omega_X$ will be denoted by $h^\omega$. We note that
the connection $\nabla^{\omega,h}$ in the statement of the proposition is the Chern
connection for $h^\omega$.

Since $\omega$ is K\"ahler--Einstein, the Hermitian structure on $\Omega_X$ induced
by $\omega$ satisfies the Yang--Mills--Higgs equation for the Higgs vector
bundle $(\Omega_X\, ,0)$. As $h$ satisfies the Yang--Mills--Higgs equation
for $(E\, ,\theta)$, this implies that $h^\omega$ satisfies the
Yang--Mills--Higgs equation for the Higgs vector bundle $({\rm End}(E)\bigotimes\Omega_X\, ,
\theta')$ constructed in \eqref{tp}. In particular, the Higgs vector
bundle $({\rm End}(E)\bigotimes
\Omega_X\, , \theta')$ is polystable.  
The Proposition is obvious if $\theta\,=\,0$. Assume that $\theta\,\not=\, 0$;
then $\varphi$ defined in \eqref{e2} is nonzero.

Since $c_1(\Omega_X)\,=\, 0$, the inequality in \eqref{e4} is an equality. Now from
\cite[Prop.~3.3]{Si1} it follows immediately that
\begin{itemize}
\item $\varphi({\mathcal O}_X)$ in \eqref{e2} is a subbundle of ${\rm End}(E)$,

\item the orthogonal complement $\varphi({\mathcal O}_X)^\perp\,\subset\,
{\rm End}(E)\bigotimes \Omega_X$ of $\varphi({\mathcal O}_X)$ with respect to the
Yang--Mills--Higgs Hermitian structure $h^\omega$ is preserved by $\theta'$, and

\item $(\varphi({\mathcal O}_X)^\perp\, , \theta'\vert_{\varphi({\mathcal O}_X)^\perp})$
is polystable with
$$
\frac{{\rm degree}(\varphi({\mathcal O}_X)^\perp)}{{\rm rank}
(\varphi({\mathcal O}_X)^\perp)}\,=\,\frac{{\rm degree}({\rm End}(E)
\otimes\Omega_X)}{{\rm rank}({\rm End}(E)\otimes\Omega_X)}\,=\, 0\, .
$$
\end{itemize}
We note that
\cite[Prop.~3.3]{Si1} also says that the Hermitian structure on
the image of $\varphi$ induced by $h^\omega$ satisfies the Yang--Mills--Higgs
equation for the Higgs vector bundle $(\varphi({\mathcal O}_X)\, ,0)$.
Since the above orthogonal complement $\varphi({\mathcal O}_X)^\perp\,\subset\, {\rm 
End}(E)\bigotimes\Omega_X$ is a holomorphic subbundle,
\begin{itemize}
\item the connection 
$\nabla^{\omega,h}$ preserves $\varphi({\mathcal O}_X)$,

\item and the 
connection on $\varphi({\mathcal O}_X)$ obtained by restricting 
$\nabla^{\omega,h}$ coincides with the Chern connection for the 
Hermitian structure $h^\omega\vert_{\varphi({\mathcal O}_X)}$.
\end{itemize}
Also, recall that $h^\omega\vert_{\varphi({\mathcal O}_X)}$ satisfies the
Yang--Mills--Higgs equation for the Higgs vector bundle $(\varphi({\mathcal O}_X)\, ,0)$.
These together imply that 
all holomorphic sections of $\varphi({\mathcal O}_X)$ over $X$ are flat with respect
to the Yang--Mills--Higgs connection $\nabla^{\omega,h}$
on ${\rm End}(E)\bigotimes\Omega_X$. In particular, the section
$\theta$ is flat with respect to $\nabla^{\omega,h}$.
\end{proof}

\subsection{Decomposition of a Higgs field}

In view of Proposition \ref{cor1}, henceforth we assume that $c_1(TX)\,=\, 0$.
Therefore, the K\"ahler--Einstein form $\omega$ is Ricci--flat.
For any point $x\, \in\, X$, the fiber of the vector bundle $\Omega_X$ over $x$
will be denoted by $\Omega_{X,x}$.

Let $(E\, ,\theta)$ be a polystable Higgs vector bundle on $X$. For any point
$x\, \in\, X$, we have a homomorphism
\begin{equation}\label{eta}
\eta_x\, :\, T_xX\, \longrightarrow\, \text{End}(E_x)\, , ~\  \eta_x(v)\,=\,
i_v(\theta(x))\, ,
\end{equation}
where $i_v\, :\, \Omega_{X,x}\, \longrightarrow\, \mathbb C$,
$z\, \longmapsto\, z(v)$, is the contraction of forms by the tangent vector $v$.

\begin{lemma}\label{lem1}
For any two points $x$ and $y$ of $X$, there are isomorphisms
$$
\alpha\, :\, T_xX\, \longrightarrow\, T_yX~ \ \text{ and }~\
\beta\, :\, E_x\, \longrightarrow\, E_y
$$
such that $\beta(\eta_x(v)(u))\,=\, (\eta_y(\alpha(v)))(\beta(u))$ for
all $v\, \in\, T_xX$ and $u\, \in\, E_x$.
\end{lemma}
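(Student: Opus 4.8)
The plan is to use Proposition~\ref{prop1}, which tells us that $\theta$ is covariantly constant with respect to the connection $\nabla^{\omega,h}$ on $\mathrm{End}(E)\bigotimes\Omega_X$. This flatness is the crucial input: a covariantly constant section is determined at any point by parallel transport from any other point, and parallel transport is an isometry for the Chern connection of a Yang--Mills--Higgs (hence, in the Ricci-flat case, genuinely flat up to the scalar) metric. So the geometric idea is that the ``shape'' of the endomorphism-valued form $\theta$ is the same at every point of $X$, and the desired $\alpha$ and $\beta$ should come from parallel transport along a path joining $x$ to $y$.

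First I would fix a smooth path $\gamma$ from $x$ to $y$ in $X$ (this is where connectedness of $X$ is used). Parallel transport along $\gamma$ using the Levi--Civita connection of the Ricci-flat metric $\omega$ gives an isomorphism $\alpha\,:\,T_xX\,\to\,T_yX$, and equivalently a dual isomorphism on the cotangent spaces $\Omega_{X,x}\,\to\,\Omega_{X,y}$. Parallel transport along $\gamma$ using the Chern connection $\nabla^h$ on $E$ gives an isomorphism $\beta\,:\,E_x\,\to\,E_y$, and this induces in turn the parallel transport $\widehat{\beta}$ on $\mathrm{End}(E)$ for $\widehat{\nabla}^h$, namely $\widehat{\beta}(A)\,=\,\beta\circ A\circ\beta^{-1}$. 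The connection $\nabla^{\omega,h}$ on $\mathrm{End}(E)\bigotimes\Omega_X$ is, by its very construction, the tensor product of $\widehat{\nabla}^h$ and the Levi--Civita connection, so its parallel transport along $\gamma$ is exactly $\widehat{\beta}\otimes(\alpha^{\vee})^{-1}$ acting on $\mathrm{End}(E_x)\bigotimes\Omega_{X,x}$.

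Next I would exploit the flatness of $\theta$: since $\nabla^{\omega,h}\theta\,=\,0$, parallel transport carries $\theta(x)$ to $\theta(y)$. Writing this out, for every $v\,\in\,T_xX$ the contraction $i_v\theta(x)\,=\,\eta_x(v)\,\in\,\mathrm{End}(E_x)$ gets carried by $\widehat{\beta}$ to $i_{\alpha(v)}\theta(y)\,=\,\eta_y(\alpha(v))\,\in\,\mathrm{End}(E_y)$; the contraction index and the parallel transport on $\Omega_X$ match up precisely because $\alpha$ is the transport on $TX$ dual to the transport on $\Omega_X$. Unwinding $\widehat{\beta}(\eta_x(v))\,=\,\beta\circ\eta_x(v)\circ\beta^{-1}$ and evaluating on $\beta(u)$ yields exactly the identity $\beta(\eta_x(v)(u))\,=\,\eta_y(\alpha(v))(\beta(u))$ claimed in the lemma.

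The main obstacle, and the point requiring the most care, is verifying that parallel transport of the tensor-product connection really does intertwine the contraction operation $\eta$ with the two separate parallel transports $\alpha$ and $\beta$ in the way asserted — in other words, checking that the contraction $i_v$ is natural with respect to parallel transport, so that the index $v$ transforms by $\alpha$ while the endomorphism part transforms by $\widehat{\beta}$. This is a compatibility of the pairing $T_xX\otimes\Omega_{X,x}\,\to\,\mathbb C$ with the Levi--Civita parallel transport (parallel transport preserves this canonical pairing), combined with the definition of $\nabla^{\omega,h}$ as a tensor-product connection. Once this naturality is in place the identity follows by a direct computation, so the bulk of the work is conceptual bookkeeping rather than hard analysis; the nontrivial analytic content has already been absorbed into Proposition~\ref{prop1}.
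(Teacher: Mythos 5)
Your proposal is correct and follows exactly the paper's own argument: the paper also fixes a smooth path from $x$ to $y$, takes $\alpha$ to be Levi--Civita parallel transport on $T_xX$, takes $\beta$ to be parallel transport on $E_x$ for the Chern connection $\nabla^h$, and deduces the intertwining identity from the flatness of $\theta$ established in Proposition~\ref{prop1}. The only difference is that you spell out the naturality of the contraction under the tensor-product parallel transport, which the paper leaves as ``straightforward to deduce.''
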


\begin{proof}
Let $h$ be an Hermitian structure on $E$ satisfying the Yang--Mills--Higgs
equation for $(E\, ,\theta)$. As before, the Chern connection on $E$
associated to $h$ will be denoted by $\nabla^h$.

Fix a $C^\infty$ path $\gamma\, :\, [0\, ,1]\, \longrightarrow\, X$ such that
$\gamma(0)\,=\, x$ and $\gamma(1)\,=\, y$.
Take $\alpha$ to be the parallel transport of $T_xX$ along $\gamma$
for the Levi--Civita connection associated to $\omega$. Take $\beta$ to be
the parallel transport of $E_x$ along $\gamma$ for the above connection
$\nabla^h$. Using Proposition \ref{prop1} it is straightforward to deduce that
$$
\beta(\eta_x(v)(u))\,=\, (\eta_y(\alpha(v)))(\beta(u))
$$
for all $v\, \in\, T_xX$ and $u\, \in\, E_x$.
\end{proof}

{}From \eqref{e1} it follows immediately that for any $v_1\, , v_2\, \in\, T_xX$,
we have
$$
\eta_x(v_1)\circ \eta_x(v_2)\,=\, \eta_x(v_2)\circ \eta_x(v_1)\, ,
$$
where $\eta_x$ is constructed in \eqref{eta}. In view of this commutativity, there
is a generalized eigenspace decomposition of $E_x$ for $\{\eta_x(v)\}_{v\in T_xX}$.
More precisely, we have
distinct elements $u^x_1\, ,\cdots\, , u^x_m\, \in\, \Omega_{X,x}$ and a decomposition
\begin{equation}\label{eta2}
E_x\,=\, \bigoplus_{i=1}^m E^i_x
\end{equation}
such that
\begin{itemize}
\item for all $v\,\in\, T_x$ and $1\, \leq\, i\, \leq\, m$,
\begin{equation}\label{equation1}
\eta_x(v)(E^i_x)\, \subseteq\, E^i_x\, ,
\end{equation}

\item the endomorphism of $E^i_x$
\begin{equation}\label{equation2}
\eta_x(v)\vert_{E^i_x} - u^x_i(v)\cdot {\rm Id}_{E^i_x}
\end{equation}
is nilpotent.
\end{itemize}
Therefore, these elements $\{u^x_i\}_{i=1}^m$ are the joint generalized eigenvalues of
$\{\eta_x(v)\}_{v\in T_xX}$. Note however that there is no ordering of the elements
$\{u^x_i\}_{i=1}^m$. From Lemma \ref{lem1} it follows immediately that the integer $m$
is independent of $x$.

Let $Y'$ denote the space of all pairs of the form $(x\, , \epsilon)$, where $x\, \in\, X$ and
$$
\epsilon\, :\, \{1\, , \cdots\, , m\}\, \longrightarrow\, \{u^x_i\}_{i=1}^m
$$
is a bijection. Clearly, $Y'$ is an \'etale Galois cover of $X$ with the permutations
of $\{1\, , \cdots\, , m\}$ as the Galois group. We note
that $Y'$ need not be connected. Fix a connected component $Y\,\subset\, Y'$. Let
\begin{equation}\label{vp}
\varpi\, :\, Y\, \longrightarrow\, X\, ,\, ~ \  (x\, ,\epsilon)\,\longmapsto\, x
\end{equation}
be the projection. So $\varpi$ is an \'etale Galois covering map.

For any $y\,=\, (x\, ,\epsilon)\, \in\, Y$, and any $i\, \in\, \{1\, ,\cdots\, ,m\}$,
the element $\epsilon(i)\,\in\, \{u^x_i\}_{i=1}^m$ will be denoted by
$\widehat{u}^{\varpi(y)}_i$.

Therefore, from \eqref{eta2} we have a decomposition
\begin{equation}\label{eta3}
\varpi^*E \,=\, \bigoplus_{i=1}^m F_i\, ,
\end{equation}
where the subspace $(F_i)_y\, \subset\, (\varpi^*E)_y\,=\,
E_{\varpi(y)}$, $y\, \in\, Y$, is the
subspace of $E_{\varpi(y)}$ which is the generalized simultaneous eigenspace of
$\{\eta_x(v)\}_{v\in T_{\varpi(y)}X}$ for the eigenvalue $\widehat{u}^{\varpi(y)}_i(v)$
(the element $\widehat{u}^{\varpi(y)}_i$ is defined above).

Clearly, \eqref{eta3} is a holomorphic decomposition of the holomorphic vector
bundle $\varpi^*E$. Consider the Higgs field $\varpi^*\theta\,\in\,
H^0(Y,\, \text{End}(\varpi^* E)\bigotimes\Omega_Y)$ on $\varpi^* E$, where
$\Omega_Y\,=\, \varpi^* \Omega_X$ is the holomorphic cotangent bundle of $Y$. From
\eqref{equation1} it follows immediately that
\begin{equation}\label{et1}
(\varpi^*\theta)(F_i)\, \subseteq\, F_i\otimes \Omega_Y\, .
\end{equation}
Let
\begin{equation}\label{et2}
\theta_i\, :=\, (\varpi^*\theta)\vert_{F_i}
\end{equation}
be the Higgs field on $F_i$ obtained by restricting $\varpi^*\theta$.

Equip $Y$ with the pulled back K\"ahler form $\varpi^*\omega$. Consider the Hermitian
structure $\varpi^* h$ on $\varpi^* E$, where $h$, as before, is a Hermitian
structure on $E$ satisfying the Yang--Mills--Higgs
equation for $(E\, ,\theta)$. It is straightforward to check that $\varpi^* h$ satisfies
the Yang--Mills--Higgs equation for $(\varpi^*E\, , \varpi^*\theta)$. In particular,
$(\varpi^*E\, , \varpi^*\theta)$ is polystable. The restriction of
$\varpi^* h$ to the subbundle $F_i$ in \eqref{eta3} will be denoted by $h_i$. Since
$$
(\varpi^*E\, , \varpi^*\theta)\,=\, \bigoplus_{i=1}^m (F_i\, , \theta_i)\, ,
$$
where $\theta_i$ is constructed in \eqref{et2}, and $\varpi^*h$ satisfies
the Yang--Mills--Higgs equation for $(\varpi^*E\, , \varpi^*\theta)$,
it follows that $h_i$
satisfies the Yang--Mills--Higgs equation for $(F_i\, , \theta_i)$
\cite[p. 878, Theorem 1]{Si1}. Consequently, $(F_i\, , \theta_i)$ is polystable.
We note that the polystability of $(F_i\, , \theta_i)$ also follows form the
fact that $(F_i\, , \theta_i)$ is a direct summand of the polystable Higgs
vector bundle $(\varpi^*E\, , \varpi^*\theta)$.

Let
\begin{equation}\label{tr}
\text{tr}(\theta_i)\, \in\, H^0(Y,\, \Omega_Y)
\end{equation}
be the trace of $\theta_i$. Let $r_i$ be the rank of the vector bundle $F_i$. Define
\begin{equation}\label{t1}
\widetilde{\theta}_i\,:=\, \theta_i - \frac{1}{r_i}\text{Id}_{F_i}\otimes
\text{tr}(\theta_i)\,\in\, H^0(Y,\, {\rm End}(F_i)\otimes\Omega_Y)\, .
\end{equation}
We note that $\widetilde{\theta}_i$ is also a Higgs field on $F_i$.

\begin{corollary}\label{cor2}
The section $\theta_i\, \in\, H^0(Y,\, {\rm End}(F_i)\bigotimes\Omega_Y)$
in \eqref{et2} is flat
with respect to the connection on ${\rm End}(F_i)\bigotimes\Omega_Y$ constructed from
$h_i$ and $\varpi^*\omega$. Similarly, $\widetilde{\theta}_i$ in \eqref{t1} is flat
with respect to this connection on ${\rm End}(F_i)\bigotimes\Omega_Y$.
\end{corollary}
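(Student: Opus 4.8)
The plan is to obtain both assertions by transporting Proposition \ref{prop1} to the covering $Y$ and then treating the trace term separately. First I would verify that $(Y,\, \varpi^*\omega)$ is again a compact Ricci--flat K\"ahler manifold, so that Proposition \ref{prop1} is available there: the covering map $\varpi$ in \eqref{vp} is finite \'etale (the fibre over $x$ consists of the bijections of $\{1,\dots,m\}$ onto the $m$ distinct eigenvalues $\{u^x_i\}_{i=1}^m$, hence has $m!$ elements), so $Y$ is compact; and since $\varpi$ is a local biholomorphism, $\varpi^*\omega$ is a K\"ahler form whose Ricci curvature equals $\varpi^*(\mathrm{Ric}(\omega))\,=\,0$, while $c_1(TY)\,=\,\varpi^* c_1(TX)\,=\,0$. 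Thus $(Y,\,\varpi^*\omega)$ is Calabi--Yau.

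For the first assertion I would use that, as recorded in the discussion preceding the corollary, $(F_i,\, \theta_i)$ is a polystable Higgs bundle on $Y$ and the restriction $h_i\,=\,(\varpi^*h)\vert_{F_i}$ satisfies the Yang--Mills--Higgs equation for $(F_i,\, \theta_i)$. Applying Proposition \ref{prop1} verbatim to the Higgs bundle $(F_i,\, \theta_i)$ on the Calabi--Yau manifold $(Y,\,\varpi^*\omega)$ with the Yang--Mills--Higgs Hermitian structure $h_i$, I conclude that $\theta_i$ is covariantly constant for the connection on $\mathrm{End}(F_i)\bigotimes\Omega_Y$ built from $\widehat{\nabla}^{h_i}$ and the Levi--Civita connection of $\varpi^*\omega$, which is exactly the connection in the statement. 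This yields the flatness of $\theta_i$.

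For the second assertion it suffices to show that the correction term $\frac{1}{r_i}\mathrm{Id}_{F_i}\otimes\mathrm{tr}(\theta_i)$ appearing in \eqref{t1} is flat, since $\widetilde{\theta}_i$ is then a difference of flat sections. Here $\mathrm{Id}_{F_i}$ is parallel for $\widehat{\nabla}^{h_i}$ (the identity endomorphism is parallel for any induced connection on an endomorphism bundle), so it is enough to see that the holomorphic $1$--form $\mathrm{tr}(\theta_i)$ in \eqref{tr} is parallel for the Levi--Civita connection on $\Omega_Y$. The cleanest route is to observe that the morphism $\mathrm{tr}\otimes\mathrm{Id}_{\Omega_Y}\,:\,\mathrm{End}(F_i)\bigotimes\Omega_Y\,\longrightarrow\,\Omega_Y$ intertwines the connection of the first assertion with the Levi--Civita connection, because the fibrewise trace is parallel for $\widehat{\nabla}^{h_i}$. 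Since $\theta_i$ is flat by the first assertion, its image $\mathrm{tr}(\theta_i)$ under this parallel morphism is flat as well; hence $\mathrm{Id}_{F_i}\otimes\mathrm{tr}(\theta_i)$ is flat, and therefore so is $\widetilde{\theta}_i$.

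The main obstacle is not any single computation but ensuring that the hypotheses of Proposition \ref{prop1} genuinely transfer to $Y$; the one substantive point is the verification that $(Y,\,\varpi^*\omega)$ is compact and Ricci--flat, after which both claims are formal consequences. As an alternative for the second assertion, one could prove directly that $\mathrm{tr}(\theta_i)$ is parallel via the Bochner formula, since on a compact Ricci--flat K\"ahler manifold every holomorphic $1$--form is harmonic and, the Ricci term vanishing, covariantly constant; but the intertwining argument above is shorter and reuses the flatness already established.
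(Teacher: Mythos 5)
Your proof is correct. For the first assertion it coincides with the paper's: both apply Proposition \ref{prop1} to $(F_i,\theta_i,h_i)$ on $(Y,\varpi^*\omega)$, and your preliminary check that $Y$ is compact (the cover is finite \'etale, with $m!$ sheets) and that $\varpi^*\omega$ is Ricci--flat is exactly the point the paper leaves implicit. For the second assertion you diverge: the paper asserts that $h_i$ also satisfies the Yang--Mills--Higgs equation for $(F_i,\widetilde{\theta}_i)$ and applies Proposition \ref{prop1} a second time, with $\widetilde{\theta}_i$ in place of $\theta_i$; that route requires the (unstated) computation that $\widetilde{\theta}_i\wedge\widetilde{\theta}_i^{\,*}\,=\,\theta_i\wedge\theta_i^{\,*}$, which holds because the central term $\tfrac{1}{r_i}\mathrm{Id}_{F_i}\otimes\mathrm{tr}(\theta_i)$ contributes only commutator terms that cancel. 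You instead deduce flatness of $\widetilde{\theta}_i$ directly from the already--established flatness of $\theta_i$, by noting that the trace map is a parallel morphism, so $\mathrm{tr}(\theta_i)$ is a parallel $1$--form and the correction term is flat. Your version is slightly more self-contained (it avoids the extra Yang--Mills--Higgs verification), and your alternative Bochner argument for the parallelism of holomorphic $1$--forms on a compact Ricci--flat K\"ahler manifold is also valid; the paper's version has the advantage of reusing Proposition \ref{prop1} as a black box with no computation about the trace term at all.
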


\begin{proof}
We noted earlier that $h_i$ satisfies the Yang--Mills--Higgs equation for
$(F_i\, ,\theta_i)$. From this it follows that $h_i$ also satisfies the Yang--Mills--Higgs
equation for $(F_i\, , \widetilde{\theta}_i)$. Therefore, substitutions of $(F_i\, , \theta_i\, ,
h_i)$ and $(F_i\, , \widetilde{\theta}_i\, ,h_i)$ in place of
$(E\, ,\theta\, ,h)$ in Proposition \ref{prop1} yield the result.
\end{proof}

\begin{proposition}\label{prop2}
The Higgs field $\widetilde{\theta}_i$ on $F_i$ in \eqref{t1} vanishes identically.
\end{proposition}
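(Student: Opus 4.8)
The plan is to identify $\widetilde{\theta}_i$ with the fibrewise nilpotent part of $\theta_i$, and then to show that the harmonic metric $h_i$ leaves this nilpotent part no room to be nonzero. Write $\widehat{u}_i$ for the eigenvalue $1$--form appearing in \eqref{eta2}--\eqref{equation2}, so that for every tangent vector $v$ the endomorphism $(\varpi^*\theta)(v)\vert_{F_i}-\widehat{u}_i(v)\cdot{\rm Id}_{F_i}$ is nilpotent. Since a nilpotent endomorphism has zero trace, ${\rm tr}(\theta_i)\,=\,r_i\,\widehat{u}_i$, and hence by \eqref{t1} the Higgs field $\widetilde{\theta}_i\,=\,\theta_i-\widehat{u}_i\otimes{\rm Id}_{F_i}$ is precisely this nilpotent part: for each $v$ the operator $\widetilde{\theta}_i(v)$ on $F_i$ is nilpotent, and from the commutativity $\eta_x(v_1)\circ\eta_x(v_2)\,=\,\eta_x(v_2)\circ\eta_x(v_1)$ (a consequence of \eqref{e1}) the family $\{\widetilde{\theta}_i(v)\}_v$ commutes.

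Next I would introduce the common kernel $G\,:=\,\ker\widetilde{\theta}_i\,=\,\bigcap_v\ker\widetilde{\theta}_i(v)\,\subseteq\,F_i$. By Corollary \ref{cor2}, $\widetilde{\theta}_i$ is covariantly constant for the connection on ${\rm End}(F_i)\bigotimes\Omega_Y$ built from $h_i$ and $\varpi^*\omega$; differentiating the relation $\widetilde{\theta}_i\cdot u\,=\,0$ for local sections $u$ of $G$ then forces the Chern connection of $h_i$ to preserve $G$, so $G$ is a holomorphic, $\nabla^{h_i}$--parallel subbundle of $F_i$. As $\widehat{u}_i\cdot{\rm Id}_{F_i}$ preserves every subsheaf and $\widetilde{\theta}_i(G)\,=\,0$, the subbundle $G$ is $\theta_i$--invariant, i.e. $(G\, ,\theta_i\vert_G)$ is a Higgs subbundle. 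Finally, since a commuting family of nilpotent operators on a nonzero finite--dimensional vector space always has a nonzero common kernel vector, $G\,\neq\,0$; thus it suffices to prove $G\,=\,F_i$, which is equivalent to $\widetilde{\theta}_i\,\equiv\,0$.

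Being $\nabla^{h_i}$--parallel, $G$ yields an orthogonal, holomorphic splitting $F_i\,=\,G\oplus G^{\perp}$. If this were a splitting of \emph{Higgs} bundles, I would be done: on the complement the restrictions $\{\widetilde{\theta}_i(v)\vert_{G^{\perp}}\}_v$ would again be a commuting family of nilpotent operators, so a nonzero $G^{\perp}$ would carry a common kernel vector lying in $\ker\widetilde{\theta}_i\cap G^{\perp}\,=\,G\cap G^{\perp}\,=\,0$, a contradiction, forcing $G^{\perp}\,=\,0$. The step I expect to be the main obstacle is exactly the upgrade from a $C^\infty$ (holomorphic) splitting to a Higgs splitting, i.e. the $\widetilde{\theta}_i$--invariance of $G^{\perp}$ (equivalently, the vanishing of the off--diagonal block of $\widetilde{\theta}_i$). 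This is where the Hermitian--Einstein property of $h_i$ must enter decisively, through the same mechanism already used in Proposition \ref{prop1}: for a Yang--Mills--Higgs metric the orthogonal complement of an \emph{equal--slope} Higgs subbundle is again a Higgs subbundle, so the obstacle reduces to establishing ${\rm degree}(G)/{\rm rank}(G)\,=\,{\rm degree}(F_i)/{\rm rank}(F_i)$. I would extract this equality by restricting the Yang--Mills--Higgs equation for $(F_i\, ,\widetilde{\theta}_i)$ to the parallel summand $G$ and integrating; an equivalent route is to show that the constant function $\|\widetilde{\theta}_i\|_{h_i}^2$ (constant by parallelism) has vanishing $L^2$--integral, and both formulations hinge on the same analytic input from the Hermitian--Einstein condition.
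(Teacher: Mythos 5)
Your skeleton matches the paper's in outline: both arguments turn on the common kernel $G=\ker\widetilde{\theta}_i$ (the paper's ${\mathcal V}_i$), both note it is nonzero (commuting nilpotents) and a subbundle (via Corollary \ref{cor2}), and both aim to exhibit it as a Higgs direct summand of the same slope and then rule out a nonzero complement (your common--kernel--vector contradiction is a slightly slicker finish than the paper's iteration, but it is the same idea). The genuine gap is exactly at the step you yourself flag as the main obstacle: proving ${\rm degree}(G)/{\rm rank}(G)={\rm degree}(F_i)/{\rm rank}(F_i)$. Your proposed route --- restrict the Yang--Mills--Higgs equation to the parallel summand $G$ and integrate --- does not deliver this. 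Carrying out that Chern--Weil computation: since $G$ is $\nabla^{h_i}$--parallel the second fundamental form vanishes, and because $\widetilde{\theta}_i\vert_G=0$ the restriction of $\Lambda_\omega(\widetilde{\theta}_i\wedge\widetilde{\theta}_i^*+\widetilde{\theta}_i^*\wedge\widetilde{\theta}_i)$ to $G$ reduces to a term of fixed sign measuring $\|\widetilde{\theta}_i^{\,*}\vert_G\|^2$, so one obtains
$$
\frac{{\rm degree}(G)}{{\rm rank}(G)}\,=\,\frac{{\rm degree}(F_i)}{{\rm rank}(F_i)}\,-\,C\int_Y\bigl\|\widetilde{\theta}_i^{\,*}\vert_G\bigr\|^2,\qquad C>0\, .
$$
The defect vanishes precisely when the image of $\widetilde{\theta}_i$ is orthogonal to $G$, i.e.\ when $G^\perp$ is $\widetilde{\theta}_i$--invariant --- the very statement you want to deduce \emph{from} the slope equality; and for a nilpotent operator the image need not be orthogonal to the kernel (it can even lie inside it). Semistability only gives the inequality in the same direction, so there is no squeeze. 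Your alternative formulation, $\int\|\widetilde{\theta}_i\|^2=0$, is a restatement of the Proposition and comes with no argument.

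The idea you are missing is the paper's: regard $\widetilde{\theta}_i$ itself as a morphism of Higgs bundles $\widetilde{\widetilde{\theta}}_i\,:\,(F_i\, ,\widetilde{\theta}_i)\,\longrightarrow\,(F_i\otimes\Omega_Y\, ,\widetilde{\theta}_i\otimes{\rm Id}_{\Omega_Y})$. Both sides are semistable (the target because $\Omega_Y$ is polystable and tensor products of semistable Higgs bundles are semistable), and they have the \emph{same} slope because $c_1(\Omega_Y)=0$ --- this is where the Calabi--Yau hypothesis enters decisively, and your argument never invokes it at this step. For a nonzero morphism between semistable sheaves of equal slope, the image, hence also the kernel, has that same slope by elementary degree/rank arithmetic; no integration is needed. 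With the slope equality in hand, polystability (Simpson's Prop.~3.3, as in Proposition \ref{prop1}) splits off $G$ as a Higgs direct summand, and your concluding contradiction on the complement then goes through.
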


\begin{proof}
Since the endomorphism in \eqref{equation2} is nilpotent, it follows that
$$
\widetilde{\theta}_i(y)(v)\, \in\, {\rm End}(\varpi^* E_y)\,=\,
\varpi^*{\rm End}(E_y)\,=\, {\rm End}(E_{\varpi(y)})
$$
is nilpotent for all $y\, \in\, Y$ and $v\, \in\, T_yY$. Consider the homomorphism
\begin{equation}\label{hh}
\widetilde{\widetilde{\theta}}_i\, :\, F_i\, \longrightarrow\, F_i\otimes
\Omega_Y\, , ~\ z\,\longmapsto\, \widetilde{\theta}_i(y)(z)\, ~ \ \forall\
z\, \in\, (F_i)_y\, .
\end{equation}
Let
\begin{equation}\label{cV}
{\mathcal V}_i\, :=\, \text{kernel}(\widetilde{\widetilde{\theta}}_i)\, \subset\, F_i
\end{equation}
be the kernel of it. From Corollary \ref{cor2} it follows that the subsheaf
${\mathcal V}_i\, \subset\, F_i$ is a subbundle. We also note that
${\mathcal V}_i$ is of positive rank.

Let
$$
{\widetilde{\theta}}^f_i\,=\, \widetilde{\theta}_i\otimes\text{Id}_{\Omega_Y}
$$
be the Higgs field on $F_i\bigotimes \Omega_Y$. Since $\varpi^*\omega$ is
K\"ahler--Einstein, and $h_i$ satisfies the Yang--Mills--Higgs equation for
$(F_i\, , \widetilde{\theta}_i)$, the Hermitian structure on $F_i\bigotimes \Omega_Y$
induced by the combination of $h_i$ and $\varpi^*\omega$ satisfies the
Yang--Mills--Higgs equation for 
$(F_i\bigotimes \Omega_Y\, , \widetilde{\theta}^f_i)$. In particular,
$(F_i\bigotimes \Omega_Y\, , \widetilde{\theta}^f_i)$ is polystable.

Note that
\begin{equation}\label{di}
\frac{{\rm degree}(F_i\otimes \Omega_Y)}{{\rm rank}(F_i\otimes \Omega_Y)}
\,=\, \frac{{\rm degree}(F_i)}{{\rm rank}(F_i)}+
\frac{{\rm degree}(\Omega_Y)}{{\rm rank}(\Omega_Y)}\,=\,
\frac{{\rm degree}(F_i)}{{\rm rank}(F_i)}\, ;
\end{equation}
the last equality follows from the fact that
$c_1(\Omega_Y)\,=\, 0$. The homomorphism $\widetilde{\widetilde{\theta}}_i$
in \eqref{hh} is compatible with the Higgs fields 
$\widetilde{\theta}_i$ and ${\widetilde{\theta}}^f_i$ on $F_i$ and
$F_i\bigotimes \Omega_Y$ respectively, meaning $\widetilde{\theta}_i\circ
\widetilde{\widetilde{\theta}}_i\,=\, \widetilde{\widetilde{\theta}}_i\circ
\widetilde{\theta}_i$. From the definition of ${\mathcal V}_i$ in \eqref{cV}
it follows immediately that $\widetilde{\theta}_i\vert_{{\mathcal
V}_i}\,=\, 0$. Hence $({\mathcal V}_i\, ,0)$ is
a Higgs subbundle of $(F_i\, ,\widetilde{\theta}_i)$. Since both
$(F_i\, , \widetilde{\theta}_i)$ and $(F_i\bigotimes \Omega_Y\, , \widetilde{\theta}^f_i)$
are semistable of same slope (see \eqref{di}), we conclude that
$({\mathcal V}_i\, ,0)$ is a Higgs subbundle of
$(F_i\, , \widetilde{\theta}_i)$ of same slope (same as that of
$F_i$). Now, as $(F_i\, , \widetilde{\theta}_i)$ is polystable, the Higgs subbundle
$({\mathcal V}_i\, ,0)$ of same slope has a direct summand.

Let $(W_i\, , {\theta}^c_i)\, \subset\, (F_i\, , \widetilde{\theta}_i)$
be a direct summand of $({\mathcal V}_i\, ,0)$. If $W_i\,=\,0$, then the proof
is complete. So assume that $W_i\,\not=\, 0$.

Substituting $(W_i\, , {\theta}^c_i)$ in place of $(F_i\, , \widetilde{\theta}_i)$
in the above argument and iterating the argument, we conclude that
$\widetilde{\theta}_i\,=\, 0$.
\end{proof}

\begin{corollary}\label{cor3}
Let $X$ be a compact $1$--connected Calabi--Yau manifold. If $(E\, ,\theta)$ is
a polystable Higgs vector bundle on $X$, then $\theta\,=\, 0$.
\end{corollary}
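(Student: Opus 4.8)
The plan is to combine the structural results already in hand---the eigenvalue decomposition over the cover $\varpi$ together with the vanishing of the traceless parts $\widetilde\theta_i$ supplied by Proposition \ref{prop2}---with two standard consequences of $1$--connectedness: that the cover $\varpi$ is trivial, and that $X$ carries no nonzero holomorphic $1$--form.

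First I would note that the \'etale Galois covering $\varpi\,\colon\, Y\,\to\, X$ of \eqref{vp} is an isomorphism. Indeed $Y$ is connected by construction, and connected \'etale covers of $X$ are classified by the subgroups of $\pi_1(X)$; since $X$ is $1$--connected we have $\pi_1(X)\,=\,0$, so the only connected cover is $X$ itself and $\varpi$ is a biholomorphism. Hence the decomposition \eqref{eta3} descends to a holomorphic decomposition $E\,=\,\bigoplus_{i=1}^m F_i$ on $X$ itself, with $\varpi^*\theta$ identified with $\theta$ and each $\theta_i$ of \eqref{et2} a genuine Higgs field on the subbundle $F_i\,\subset\, E$.

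Next I would invoke Proposition \ref{prop2}, which gives $\widetilde\theta_i\,=\,0$ for every $i$. By the definition \eqref{t1} of $\widetilde\theta_i$ this means
\begin{equation*}
\theta_i\,=\,\frac{1}{r_i}\,\text{Id}_{F_i}\otimes\text{tr}(\theta_i)\, ,
\end{equation*}
so $\theta_i$ is completely determined by the holomorphic $1$--form $\text{tr}(\theta_i)\,\in\, H^0(X,\,\Omega_X)$ of \eqref{tr}. The crucial point is now that $H^0(X,\,\Omega_X)\,=\, H^{1,0}(X)\,=\,0$: being compact K\"ahler and $1$--connected, $X$ has $b_1(X)\,=\,0$, and Hodge symmetry $b_1\,=\,2h^{1,0}$ forces $h^{1,0}\,=\,0$. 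Therefore $\text{tr}(\theta_i)\,=\,0$ for all $i$, whence $\theta_i\,=\,0$, and consequently $\theta\,=\,\bigoplus_{i=1}^m\theta_i\,=\,0$.

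I expect the only genuinely conceptual step to be the descent, i.e.\ the passage from $Y$ back to $X$: once $1$--connectedness trivialises $\varpi$ so that the $F_i$ and $\theta_i$ live on $X$, everything else is either immediate from Proposition \ref{prop2} or is the standard Hodge-theoretic vanishing of holomorphic $1$--forms on a simply connected compact K\"ahler manifold. One could alternatively avoid the cover altogether: the set of generalized eigenvalues $\{u^x_i\}$ of \eqref{eta2} is globally well defined on $X$, and $1$--connectedness removes the monodromy obstruction to labelling them coherently, yielding the splitting $E\,=\,\bigoplus_i F_i$ directly on $X$ and then the same conclusion.
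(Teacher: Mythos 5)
Your proposal is correct and follows essentially the same route as the paper: triviality of the cover $\varpi$ from $1$--connectedness, vanishing of $H^0(X,\Omega_X)$ via $b_1(X)=0$ and Hodge symmetry, and Proposition \ref{prop2} to kill the traceless part. The only (immaterial) difference is the order in which you combine $\widetilde\theta_i=0$ and $\mathrm{tr}(\theta_i)=0$; the paper uses the trace vanishing to identify $\widetilde\theta_i$ with $\theta_i$ and then applies Proposition \ref{prop2}, whereas you apply the proposition first and then kill the scalar part.
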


\begin{proof}
Since $X$ is simply connected, it follows that $\varpi$ in \eqref{vp} is an
isomorphism. We have $H^0(X,\, \Omega_X)\,=\, 0$, because $b_1(X)\,=\, 0$
and $\dim H^0(X,\, \Omega_X)\,=\, b_1(X)/2$. Therefore,
$\text{tr}(\theta_i)$ in \eqref{tr} vanishes identically, and hence
$\widetilde{\theta}_i$ in \eqref{t1} is $\theta_i$ itself. Now Proposition
\ref{prop2} completes the proof.
\end{proof}

\bigskip
\section{Independence of Yang--Mills--Higgs Hermitian structure}

As before, $X$ is a compact connected K\"ahler manifold with $c_1(TX)\,=\, 0$, and
$\omega$ is a Ricci--flat K\"ahler form on $X$. Let $(E\, ,\theta)$ be a polystable 
Higgs vector bundle on $X$. Let $h$ be a Hermitian structure on $E$ satisfying
the Yang--Mills--Higgs equation for $(E\, ,\theta)$. We will continue to use the
set-up of Section \ref{sec2}.

\begin{lemma}\label{lem2}
The decomposition in \eqref{eta3} is orthogonal with respect to the pulled back
Hermitian structure $\varpi^*h$ on $\varpi^*E$.
\end{lemma}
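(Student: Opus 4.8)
My plan is to prove the stronger statement that for each fixed $i$ the $\varpi^*h$--orthogonal complement of $F_i$ inside $\varpi^*E$ equals $\bigoplus_{j\neq i}F_j$; letting $i$ range over $\{1,\dots,m\}$, this is precisely the orthogonality of the decomposition \eqref{eta3}. The only genuinely analytic ingredient I expect to need is Simpson's \cite[Prop.~3.3]{Si1}, which was already the key tool in the proof of Proposition \ref{prop1}.

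First I would record the algebraic set-up. By \eqref{et1} each $F_i$ is a $\varpi^*\theta$--invariant holomorphic subbundle, so $(F_i,\theta_i)$ is a Higgs subbundle of $(\varpi^*E,\varpi^*\theta)$; and since the latter is polystable (noted in Section \ref{sec2}), each direct summand $(F_i,\theta_i)$ has slope equal to that of $\varpi^*E$. In particular $F_i$ saturates the semistability inequality. I then invoke \cite[Prop.~3.3]{Si1} for the Yang--Mills--Higgs metric $\varpi^*h$: the orthogonal complement $F_i^{\perp}\subset\varpi^*E$ taken with respect to $\varpi^*h$ is again a $\varpi^*\theta$--invariant holomorphic subbundle, and $\varpi^*h$ splits orthogonally into its restrictions to $F_i$ and to $F_i^{\perp}$.

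It remains to identify $F_i^{\perp}$ with $\bigoplus_{j\neq i}F_j$, which is the step needing a little care. Because the joint generalized eigenvalues $u^x_1,\dots,u^x_m\in\Omega_{X,x}$ are pairwise distinct, at any point $y$ with $\varpi(y)=x$ I can choose a single tangent vector $v$ for which the scalars $u^x_1(v),\dots,u^x_m(v)$ are pairwise distinct; then the \emph{single} endomorphism $\eta_{x}(v)$ already has the fibres $(F_k)_y$ as its generalized eigenspaces. Since $(F_i^{\perp})_y$ is invariant under $\eta_{x}(v)$ (as $F_i^{\perp}$ is $\varpi^*\theta$--invariant), it is the direct sum of its intersections with these generalized eigenspaces, $(F_i^{\perp})_y=\bigoplus_k\big((F_i^{\perp})_y\cap(F_k)_y\big)$, the spectral projections being polynomials in $\eta_x(v)$. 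As $(F_i^{\perp})_y\cap(F_i)_y=0$ and $\dim(F_i^{\perp})_y=\sum_{k\neq i}\dim(F_k)_y$, a dimension count forces $(F_k)_y\subseteq(F_i^{\perp})_y$ for every $k\neq i$, i.e. $F_k\perp F_i$.

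The main obstacle is the second paragraph: the fact that the orthogonal complement of a slope--saturating Higgs subbundle is again a Higgs subbundle is a nontrivial analytic property of Yang--Mills--Higgs metrics, and it is exactly what \cite[Prop.~3.3]{Si1} provides; once this is granted, the identification of $F_i^{\perp}$ is purely fibrewise linear algebra. The one hypothesis I would verify carefully is that the slopes of all the $F_i$ coincide, which holds because they are direct summands of the polystable Higgs bundle $(\varpi^*E,\varpi^*\theta)$. I note that this route uses neither Proposition \ref{prop2} nor the flatness established in Proposition \ref{prop1}, relying only on polystability and the eigenbundle structure.
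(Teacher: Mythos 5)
Your proof is correct, but it takes a genuinely different route from the paper's. The paper argues via uniqueness of Yang--Mills--Higgs metrics: since each $(F_i,\theta_i)$ is polystable it carries a Yang--Mills--Higgs metric $\widetilde h_i$, the direct sum $\bigoplus_i\widetilde h_i$ is a Yang--Mills--Higgs metric for $(\varpi^*E,\varpi^*\theta)$ that is orthogonal for \eqref{eta3} by construction, and by \cite[Thm.~1]{Si1} it differs from $\varpi^*h$ by a holomorphic automorphism $T$ of the Higgs bundle; the paper then checks that any such automorphism preserves \eqref{eta3}, because \eqref{eta3} is the generalized eigenbundle decomposition of $\varpi^*\theta$ and the eigenvalue one-forms $\frac{1}{r_i}\mathrm{tr}(\theta_i)$ are pairwise distinct, so the orthogonality transports from $\bigoplus_i\widetilde h_i$ to $\varpi^*h$. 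You instead apply \cite[Prop.~3.3]{Si1} directly to each slope-saturating Higgs subbundle $F_i$ to conclude that its $\varpi^*h$-orthogonal complement is a $\varpi^*\theta$-invariant holomorphic subbundle, and then identify that complement with $\bigoplus_{j\neq i}F_j$ by fibrewise linear algebra (a generic $v$ makes the scalars $u^x_k(v)$ pairwise distinct --- possible since $T_xX$ is not a finite union of the proper subspaces $\ker(u^x_j-u^x_k)$ --- and then spectral projections plus a dimension count finish the job). Both arguments rest on the same two inputs, namely a structural result of Simpson about Yang--Mills--Higgs metrics and the pairwise distinctness of the generalized eigenvalues $u^x_i$; what yours buys is that it bypasses the existence and uniqueness theorem for Yang--Mills--Higgs metrics (and the polystability of the individual $(F_i,\theta_i)$) in favour of the orthogonal-complement statement of \cite[Prop.~3.3]{Si1}, which the paper already used in Proposition \ref{prop1}. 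Your fibrewise identification of $F_i^{\perp}$ is in essence the same linear algebra that underlies the paper's claim that automorphisms of $(\varpi^*E,\varpi^*\theta)$ preserve \eqref{eta3}, so the two proofs are close in spirit even though they are organized around different theorems of \cite{Si1}.
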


\begin{proof}
The decomposition in \eqref{eta3} gives a decomposition of the Higgs vector bundle
$(\varpi^*E\, ,\varpi^*\theta)$
$$
(\varpi^*E\, ,\varpi^*\theta)\,=\, \bigoplus_{i=1}^m (F_i\, ,\theta_i)\, ,
$$
where $\theta_i$ are constructed in \eqref{et2}. Recall
that $(\varpi^*E\, ,\varpi^*\theta)$ and all
$(F_i\, ,\theta_i)$ are polystable. If $\widetilde{h}_i$, $1\,\leq\, i\,\leq\, m$,
is a  Hermitian structure on $F_i$ satisfying the Yang--Mills--Higgs
equation for $(F_i\, ,\theta_i)$, then
the Hermitian structure $\bigoplus_{i=1}^m \widetilde{h}_i$ on $\varpi^*E$,
constructed using the decomposition in \eqref{eta3}, clearly satisfies
the Yang--Mills--Higgs equation for $(\varpi^*E\, ,\varpi^*\theta)$.

Any two Hermitian structures on $\varpi^*E$ that satisfy the
Yang--Mills--Higgs equation for $(\varpi^*E\, , \varpi^*\theta)$, differ
by a holomorphic automorphism of the Higgs vector bundle
$(\varpi^*E\, ,\varpi^*\theta)$ \cite[p. 878, Theorem 1]{Si1}. In particular, there
is a holomorphic automorphism
$$
T\, :\, \varpi^*E\,\longrightarrow\, \varpi^*E
$$
such that $(T\otimes\text{Id}_{\Omega_Y})\circ (\varpi^*\theta)\,=\,
(\varpi^*\theta)\circ T$, and
\begin{equation}\label{eq3}
\bigoplus_{i=1}^m \widetilde{h}_i (a\, ,b)\,=\, \varpi^* h(T(a)\, , T(b))\, .
\end{equation}
Therefore, the lemma follows once it is shown that
any holomorphic automorphism of the Higgs vector bundle $(\varpi^*E\, ,\varpi^*\theta)$
preserves the decomposition in \eqref{eta3}. Note that the decomposition in \eqref{eta3}
is orthogonal for the above Hermitian structure
$\bigoplus_{i=1}^m \widetilde{h}_i$ on $\varpi^*E$. If the above automorphism $T$
preserves the decomposition in \eqref{eta3}, then from \eqref{eq3} it follows immediately
that the decomposition in \eqref{eta3} is orthogonal with respect to $\varpi^* h$.

{}From the construction of the decomposition in \eqref{eta3} it follows that the
$m$ sections
$$
\frac{1}{r_1}\text{tr}(\theta_1)\, ,\cdots\, , 
\frac{1}{r_m}\text{tr}(\theta_m)\, \in\, H^0(Y,\, \Omega_Y)
$$
in \eqref{tr} and \eqref{t1} are distinct; as mentioned just before
\eqref{eta2}, the elements $\{u^x_i\}_{i=1}^m$ are all distinct. Indeed, \eqref{eta3}
is the generalized eigenspace decomposition for $\varpi^*\theta$, and
$\frac{1}{r_1}\text{tr}(\theta_1)\, ,\cdots\, ,
\frac{1}{r_m}\text{tr}(\theta_m)$ are the eigenvalues. It now follows that
any automorphism of the Higgs vector bundle $(\varpi^*E\, ,\varpi^*\theta)$ preserves
the decomposition in \eqref{eta3}. As observed earlier, this completes the proof.
\end{proof}

\begin{lemma}\label{lem3}
The section
$$
\theta\bigwedge\theta^*\, \in\, C^\infty(X,\, {\rm End}(E)\otimes \Omega^{1,1}_X)
$$
(see \eqref{ymh}) vanishes identically.
\end{lemma}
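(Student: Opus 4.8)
The plan is to descend the entire question to the étale Galois cover $\varpi\colon Y\to X$ of \eqref{vp}, on which the Higgs field has already been brought into a normal form, and then to read off the vanishing block by block. Since $\varpi$ is a surjective local biholomorphism, the pullback map on smooth sections of $\mathrm{End}(E)\otimes\Omega^{1,1}_X$ is injective, so it suffices to show that $\varpi^*(\theta\wedge\theta^*)$ vanishes on $Y$. Moreover, since $\varpi^*h$ is the fiberwise pullback of $h$ and $\varpi$ is a local isomorphism, taking the $h$-adjoint commutes with pullback; hence $\varpi^*(\theta\wedge\theta^*)=(\varpi^*\theta)\wedge(\varpi^*\theta)^*$, the adjoint on the right now being taken with respect to $\varpi^*h$.

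Next I would invoke the two structural facts already established on $Y$. By Lemma \ref{lem2} the decomposition $\varpi^*E=\bigoplus_{i=1}^m F_i$ of \eqref{eta3} is orthogonal for $\varpi^*h$, so the orthogonal projections $\Pi_i\colon\varpi^*E\to F_i$ are self-adjoint and $(\varpi^*\theta)^*$ is again block diagonal, with blocks $\theta_i^*$. By Proposition \ref{prop2} we have $\widetilde\theta_i=0$, which by \eqref{t1} means that on each summand the restricted Higgs field is scalar, $\theta_i=\tau_i\cdot\mathrm{Id}_{F_i}$ with $\tau_i:=\tfrac1{r_i}\mathrm{tr}(\theta_i)\in H^0(Y,\Omega_Y)$. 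Thus $\varpi^*\theta=\sum_i\tau_i\,\Pi_i$ and $(\varpi^*\theta)^*=\sum_i\overline{\tau_i}\,\Pi_i$.

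I would then compute directly. Because $\Pi_i\Pi_j=\delta_{ij}\Pi_i$, only the diagonal blocks survive in $(\varpi^*\theta)\wedge(\varpi^*\theta)^*$, and on each of them the endomorphism factors of $\theta_i$ and $\theta_i^*$ are the scalar $\mathrm{Id}_{F_i}$, which commute. Consequently the contribution of the $i$-th block reduces to the two orderings $\tau_i\wedge\overline{\tau_i}$ and $\overline{\tau_i}\wedge\tau_i$, which cancel because the wedge of $1$-forms is antisymmetric. Hence $(\varpi^*\theta)\wedge(\varpi^*\theta)^*=0$ on $Y$, and by the injectivity of pullback $\theta\wedge\theta^*=0$ on $X$.

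The genuine crux is the interplay of the two structural inputs: the orthogonality of the eigenbundle decomposition (Lemma \ref{lem2}) is exactly what keeps $\theta^*$ block diagonal and eliminates all off-diagonal cross terms, while the scalar normal form on each block (Proposition \ref{prop2}) is what makes the surviving diagonal terms cancel. Neither ingredient alone suffices; by contrast, the étale descent and the bookkeeping of the adjoint are routine.
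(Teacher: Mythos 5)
Your proof is correct and follows essentially the same route as the paper's: combine the orthogonality of the eigenbundle decomposition (Lemma \ref{lem2}) with the vanishing of $\widetilde{\theta}_i$ (Proposition \ref{prop2}) to reduce to scalar blocks $\theta_i=\tau_i\cdot\mathrm{Id}_{F_i}$, on which the (graded commutator) expression $\theta\wedge\theta^*$ vanishes. You merely spell out the details the paper leaves implicit, namely the compatibility of the adjoint with pullback along the \'etale cover and the cancellation of the two orderings on each scalar block.
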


\begin{proof}
Consider $\theta_i$ defined in \eqref{et2}. From Proposition \ref{prop2} it follows
immediately that
\begin{equation}\label{en}
\widetilde{\theta}_i\bigwedge\widetilde{\theta}^*_i\,=\,0\, .
\end{equation}
Since the decomposition in \eqref{eta3} is orthogonal by Lemma \ref{lem2}, from
\eqref{en} and \eqref{t1} we conclude that
$$
(\varpi^*\theta)\bigwedge(\varpi^*\theta^*)\,=\,0\, .
$$
This implies that $\theta\wedge\theta^*\,=\, 0$.
\end{proof}

\begin{theorem}\label{thm1}
Let $(E\, ,\theta)$ be a polystable Higgs vector bundle on $X$ equipped with a
Yang--Mills--Higgs structure $h$. Then $h$ also satisfies the Yang--Mills--Higgs equation
for the Higgs vector bundle $(E\, ,0)$.
\end{theorem}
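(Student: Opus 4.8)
The plan is to observe that the two Yang--Mills--Higgs equations in question differ only by the single term $\Lambda_\omega(\theta\wedge\theta^*)$, so the theorem reduces to showing that this term vanishes---which is precisely the content of Lemma \ref{lem3}.

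First I would write the two equations side by side. By \eqref{ymh}, the Yang--Mills--Higgs equation for $(E\, ,\theta)$ reads
$$\Lambda_\omega(\mathcal{K}_h + \theta\wedge\theta^*)\,=\, c\sqrt{-1}\cdot\text{Id}_E\, ,$$
whereas the Yang--Mills--Higgs equation for $(E\, ,0)$, obtained by setting the Higgs field equal to zero, reads
$$\Lambda_\omega(\mathcal{K}_h)\,=\, c\sqrt{-1}\cdot\text{Id}_E\, .$$
Here the curvature $\mathcal{K}_h$ of the Chern connection is the same in both equations, since the Chern connection is determined by $h$ and the holomorphic structure of $E$ alone and does not involve $\theta$. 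Moreover the constant is the same $c$ in both cases, as it is fixed by the slope $\text{degree}(E)/\text{rank}(E)$, which does not change when $\theta$ is dropped.

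Next I would exploit the linearity of the contraction operator $\Lambda_\omega$ to write
$$\Lambda_\omega(\mathcal{K}_h + \theta\wedge\theta^*)\,=\, \Lambda_\omega(\mathcal{K}_h) + \Lambda_\omega(\theta\wedge\theta^*)\, .$$
By Lemma \ref{lem3} we have $\theta\wedge\theta^*\,=\, 0$, and hence $\Lambda_\omega(\theta\wedge\theta^*)\,=\, 0$. Substituting this into the equation satisfied by $h$ for $(E\, ,\theta)$ immediately yields $\Lambda_\omega(\mathcal{K}_h)\,=\, c\sqrt{-1}\cdot\text{Id}_E$, which is exactly the Yang--Mills--Higgs equation for $(E\, ,0)$.

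The substantive work has therefore already been carried out in the preceding results, so no genuine obstacle remains at this final step. The real difficulty lies entirely in Lemma \ref{lem3}, which in turn rests on the orthogonality of the eigenspace decomposition (Lemma \ref{lem2}) and on the vanishing of the tracefree parts $\widetilde{\theta}_i$ established in Proposition \ref{prop2}. Granting those, the theorem itself is immediate, requiring only the linearity of $\Lambda_\omega$ and the observation that $\mathcal{K}_h$ is unaffected by passing from $\theta$ to $0$.
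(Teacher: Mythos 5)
Your proposal is correct and follows exactly the paper's own (one-line) argument: Lemma \ref{lem3} gives $\theta\wedge\theta^*=0$, so the equation \eqref{ymh} for $(E,\theta)$ reduces term by term to the one for $(E,0)$. The additional remarks about $\mathcal{K}_h$ being independent of $\theta$ and the constant $c$ being fixed by the slope are accurate and only make explicit what the paper leaves implicit.
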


\begin{proof}
In view of Lemma \ref{lem3}, this follows immediately from \eqref{ymh}.
\end{proof}

\begin{remark}\label{rem1}
{\rm It should be clarified that the converse of Theorem \ref{thm1} is not valid.
In other words, if $h$ is an Hermitian structure on $E$ satisfying the
Yang--Mills--Higgs equation for $(E\, ,0)$, then $h$ need not
satisfy the Yang--Mills--Higgs equation for $(E\, ,\theta)$. The reason for it is that
the automorphism group of $(E\, ,0)$ is in general bigger than the
automorphism group of $(E\, ,\theta)$. To give an example, take $X$ to be a complex
elliptic curve equipped with a flat metric. Take $E$ to be the trivial vector
bundle ${\mathcal O}^{\oplus 2}_X$ on $X$ of rank two. Let $\theta$ be the Higgs field on
${\mathcal O}^{\oplus 2}_X$ given by the matrix
$$
A\,:=\, 
\begin{pmatrix}
1 & 1\\
0 & 2
\end{pmatrix}\, ;
$$
fixing a trivialization of $\Omega_X$, we identify the Higgs fields on
${\mathcal O}^{\oplus 2}_X$ with the $2\times 2$ complex matrices. This
Higgs vector bundle $(E\, , \theta)$ is polystable because the matrix $A$ is
semisimple. The Hermitian structure on ${\mathcal O}^{\oplus 2}_X$ given by the standard
inner product on ${\mathbb C}^2$ satisfies the Yang--Mills--Higgs equation for $(E\, ,
0)$, but this Hermitian structure does not satisfy Yang--Mills--Higgs equation for
$(E\, , \theta)$ (because $AA^*\, \not=\, A^*A$).}
\end{remark}

\begin{remark}\label{rem2}
{\rm Let $(E\, ,\theta)$ be a polystable Higgs vector bundle on $X$. From
Theorem \ref{thm1} we know that the Higgs vector bundle $(E\, ,0)$ is polystable. Fix
a Hermitian structure $h_0$ on $E$ satisfying the Yang--Mills--Higgs equation for
$(E\, ,0)$. Any other Hermitian structure on $E$ that satisfies the Yang--Mills--Higgs
equation for $(E\, ,0)$ differs from $h_0$ by a holomorphic automorphism of $E$. Take
a holomorphic automorphism $T$ of $E$ such that the Hermitian structure $h\, :=\, T^*h_0$
on $E$ has the following property:
$$
\theta\bigwedge \theta^{*_h}\,=\, 0\, ,
$$
where $\theta^{*_h}$ is the adjoint of $\theta$ constructed using $h$. From
Lemma \ref{lem3} and Theorem \ref{thm1} it follows that such an automorphism $T$ exists.
The above Hermitian structure $h$ satisfies the Yang--Mills--Higgs equation for
$(E\, ,\theta)$.}
\end{remark}

\bigskip
\section{Polystable principal Higgs $G$--bundles}\label{se4}

Let $G$ be a connected reductive affine algebraic group defined over $\mathbb C$.
The Lie algebra of $G$ will be denoted by $\mathfrak g$.
As before, $X$ is a compact connected K\"ahler manifold 
equipped with a Ricci--flat K\"ahler form $\omega$.
Let $E_G\, \longrightarrow\, X$ be a holomorphic principal $G$--bundle. Its adjoint
vector bundle $E_G\times^G\mathfrak g$ will be denoted by $\text{ad}(E_G)$.
A Higgs field on $E_G$ is a holomorphic section
$$
\theta\, \in\, H^0(X,\, \text{ad}(E_G)\otimes\Omega_X)
$$
such that the section $\theta\bigwedge\theta$ of $\text{ad}(E_G)\bigotimes\Omega^2_X$
vanishes identically. A Higgs $G$--bundle on $X$ is a pair of the form $(E_G\, ,\theta)$,
where $E_G$ is a holomorphic principal $G$--bundle on $X$, and $\theta$ is a
Higgs field on $E_G$.

Fix a maximal compact subgroup
$$
K_G\, \subset\, G\, .
$$
A Hermitian structure on a holomorphic principal $G$--bundle $E_G$ on $X$ is a
$C^\infty$ reduction of structure group of $E_G$
$$
E_{K_G}\, \subset\, E_G
$$
to the subgroup $K_G$. There is a unique $C^\infty$ connection $\nabla$ on the principal
$K_G$--bundle $E_{K_G}$ such that the connection on $E_G$ induced by $\nabla$ is
compatible with the holomorphic structure of $E_G$ \cite[p. 191--192, Proposition
5]{At}. Using the decomposition ${\mathfrak g}\,=\, \text{Lie}(K)\oplus\mathfrak p$,
given any Higgs field $\theta$ on $E_G$, we have
$$
\theta^*\, \in\, C^\infty(X;\, \text{ad}(E_G)\otimes \Omega^{0,1}_X)\, .
$$

Let $(E_G\, ,\theta)$ be a Higgs $G$--bundle on $X$.
The center of the Lie algebra $\mathfrak g$ will be denoted by $z(\mathfrak g)$.
Since the adjoint action of $G$ on $z(\mathfrak g)$ is trivial, we have an injective
homomorphism
\begin{equation}\label{psi}
\psi\, :\, X\times z(\mathfrak g)\, \hookrightarrow\, \text{ad}(E_G)
\end{equation}
{}from the trivial vector bundle with fiber $z(\mathfrak g)$.
This homomorphism $\psi$ produces an injective homomorphism
$$
\widehat{\psi}\, :\, z(\mathfrak g)\, \hookrightarrow\, H^0(X,\, \text{ad}(E_G))\, .
$$

A Hermitian structure $E_{K_G}\, \subset\, E_G$ is said to satisfy the Yang--Mills--Higgs
equation for $(E_G\, ,\theta)$ if there is an element $c\, \in\, z(\mathfrak g)$ such
that
$$
\Lambda_\omega({\mathcal K}(\nabla)+\theta\bigwedge\theta^*) \,=\, \widehat{\psi}(c)\, ,
$$
where ${\mathcal K}(\nabla)$ is the curvature of the connection $\nabla$ associated to
the reduction $E_{K_G}$, and $\theta^*$ is defined above.

It is known that $(E_G\, ,\theta)$ admits a Yang--Mills--Higgs Hermitian structure
if and only if $(E_G\, ,\theta)$ is polystable \cite{Si2}, \cite[p. 554, Theorem
4.6]{BS}. (See \cite{BS} for the definition of a polystable Higgs $G$--bundle.)

\begin{lemma}\label{lem4}
Let $(E_G\, ,\theta)$ be a Higgs $G$--bundle on $X$ equipped with an Hermitian
structure $E_{K_G}\, \subset\, E_G$ that satisfies the Yang--Mills--Higgs
equation for $(E_G\, ,\theta)$. Then
$$
\theta\bigwedge\theta^*\,=\, 0\, .
$$
\end{lemma}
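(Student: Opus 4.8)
The plan is to reduce the statement to the vector bundle case already settled in Lemma \ref{lem3}, by pushing everything forward along a faithful representation of $G$. Since $G$ is a connected reductive affine algebraic group over $\mathbb C$, fix a faithful finite--dimensional representation $\rho\colon G\hookrightarrow GL(V)$. As $K_G$ is compact, after averaging a Hermitian inner product over $K_G$ we may assume $\rho(K_G)\,\subset\, U(V)$, and then $d\rho\colon\mathfrak g\,\to\,{\rm End}(V)$ is injective. Let $E\,:=\,E_G\times^G V$ be the associated holomorphic vector bundle. The reduction $E_{K_G}$ induces a Hermitian structure $h$ on $E$, the Higgs field $\theta$ induces a Higgs field $\theta_E\,:=\,\rho_*\theta$ on $E$ (the identity $\theta_E\wedge\theta_E\,=\,0$ follows from $\theta\wedge\theta\,=\,0$ because $d\rho$ is a homomorphism of Lie algebras), and the fibrewise injectivity of $d\rho$ yields an injective bundle map $\rho_*\colon\text{ad}(E_G)\,\hookrightarrow\,{\rm End}(E)$.

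Next I would record how the relevant structures intertwine under $\rho_*$. The Chern connection of $h$ on $E$ is the one induced by $\nabla$, so its curvature is $\rho_*{\mathcal K}(\nabla)$; the condition $\rho(K_G)\subset U(V)$ guarantees $\theta_E^*\,=\,\rho_*(\theta^*)$; and, writing $\theta$ locally as $\sum_k A_k\otimes\alpha_k$ with $A_k\,\in\,\mathfrak g$, the relation $d\rho([A,B])\,=\,[d\rho(A),d\rho(B)]$ shows that $\rho_*(\theta\wedge\theta^*)$ (where the product on $\text{ad}(E_G)$ uses the Lie bracket) equals the term $\theta_E\wedge\theta_E^*$ occurring in \eqref{ymh} for $(E\, ,\theta_E)$. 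Applying $\rho_*$ to the Yang--Mills--Higgs equation for $(E_G\, ,\theta)$ therefore gives
\[
\Lambda_\omega({\mathcal K}_h+\theta_E\wedge\theta_E^*)\,=\,\Psi\, ,\qquad \Psi\,:=\,\rho_*\widehat{\psi}(c)\, .
\]

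The one point requiring care is that $\Psi$ need not be a scalar multiple of $\text{Id}_E$. Since $c\,\in\,z(\mathfrak g)$, the section $\widehat\psi(c)$ is covariantly constant, so $\Psi\,=\,\rho_*\widehat\psi(c)$ is a flat, hence holomorphic, endomorphism of $E$; being skew--Hermitian (the whole left side above is), it is normal, so $E\,=\,\bigoplus_a E^{(a)}$ splits into its eigenbundles, which are holomorphic, mutually orthogonal for $h$, and parallel. Because $c$ is central, $[\Psi,\theta_E]\,=\,\rho_*[c,\theta]\,=\,0$, so $\theta_E$ preserves each $E^{(a)}$. On the summand $E^{(a)}$ the displayed equation reads $\Lambda_\omega({\mathcal K}_h+\theta_E\wedge\theta_E^*)\,=\,\lambda_a\,\text{Id}_{E^{(a)}}$ with a constant $\lambda_a\,\in\,\sqrt{-1}\,\mathbb R$; that is, $h\vert_{E^{(a)}}$ is a Yang--Mills--Higgs structure for $(E^{(a)}\, ,\theta_E\vert_{E^{(a)}})$. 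In particular each such summand is polystable, and Lemma \ref{lem3} gives $\theta_E\vert_{E^{(a)}}\wedge(\theta_E\vert_{E^{(a)}})^*\,=\,0$.

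Summing over $a$ yields $\theta_E\wedge\theta_E^*\,=\,0$, hence $\rho_*(\theta\wedge\theta^*)\,=\,0$, and since $\rho_*$ is fibrewise injective we conclude $\theta\wedge\theta^*\,=\,0$, as required. I expect the main obstacle to be the bookkeeping of the second paragraph: checking that the curvature, the adjoint $\theta^*$, and the bracket--wedge $\theta\wedge\theta^*$ on $\text{ad}(E_G)$ all transform correctly under $\rho_*$ into their ${\rm End}(E)$ counterparts. This is precisely where the choice $\rho(K_G)\subset U(V)$ and the Lie--algebra--homomorphism property of $d\rho$ are used; everything else is a formal consequence of Lemma \ref{lem3} together with the injectivity of $\rho_*$.
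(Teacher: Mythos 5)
Your proof is correct, but it takes a genuinely different route from the paper's. The paper also reduces to Lemma \ref{lem3}, but via the \emph{adjoint} representation: it applies Lemma \ref{lem3} to $(\mathrm{ad}(E_G),\mathrm{ad}(\theta))$ (here the central constant $\widehat{\psi}(c)$ is killed by $\mathrm{ad}$, so the induced metric satisfies \eqref{ymh} on the nose with $c=0$ and no eigenbundle decomposition is needed), concludes that $\theta\wedge\theta^*$ takes values in $\psi(z(\mathfrak g))\otimes\Omega^{1,1}_X$, and then disposes of the central part by pushing forward along holomorphic characters $\chi$ of $G$ --- on the associated line bundles $L^\chi$ one has $\theta^\chi\wedge(\theta^\chi)^*=0$ for free, and the differentials $d\chi$ separate the points of $z(\mathfrak g)$. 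You instead choose a single faithful unitary representation $\rho$, which avoids the kernel problem of $\mathrm{ad}$ entirely, at the price of having to handle the non-scalar term $\Psi=\rho_*\widehat{\psi}(c)$; your parallel, normal eigenbundle decomposition of $E$ does this correctly (note $\Psi=d\rho(c_1)+\sqrt{-1}\,d\rho(c_2)$ with $c_1,c_2\in z(\mathfrak k)$ commuting, so normality holds even though $\Psi$ need not be skew--Hermitian for a general central $c$; this does not affect your argument). In effect the paper uses the pair $\{\mathrm{ad}\}\cup\{\chi\}$, whose differentials jointly have trivial kernel on $\mathfrak g$, where you use one faithful $\rho$; your version trades the character argument for the bookkeeping of the block decomposition, and both reductions to Lemma \ref{lem3} are sound.
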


\begin{proof}
This follows by applying Lemma \ref{lem3} to the Higgs vector bundle associated
to $(E_G\, ,\theta)$ for the adjoint action of $G$ on $\mathfrak g$. Consider the
adjoint Higgs vector bundle $(\text{ad}(E_G)\, ,\text{ad}(\theta))$. The reduction
$E_{K_G}$ produces a Hermitian structure on the vector bundle $\text{ad}(E_G)$ that satisfies
the Yang--Mills--Higgs equation for $(\text{ad}(E_G)\, ,\text{ad}(\theta))$. Now
Lemma \ref{lem3} says that
$$
\text{ad}(\theta)\bigwedge\text{ad}(\theta)^*\,=\, 0\, .
$$
This immediately implies that the $C^\infty$ section $\theta\bigwedge\theta^*$ of
$\text{ad}(E_G)\bigotimes\Omega^{1,1}_X$ is actually a section of
$\psi(z(\mathfrak g))\bigotimes \Omega^{1,1}_X$, where $\psi$ is the homomorphism
in \eqref{psi}.

Take any holomorphic character $\chi\, :\, G\, \longrightarrow\, {\mathbb C}^*$. Let
$$
L^\chi\, :=\, E_G\times^\chi \mathbb C\, \longrightarrow\, X
$$
be the holomorphic line bundle associated to $E_G$ for $\chi$. The Higgs field $\theta$
defines a Higgs field on $L^\chi$ using the homomorphism of Lie algebras
\begin{equation}\label{eq4}
d\chi\, :\, {\mathfrak g}\,\longrightarrow\, \mathbb C
\end{equation}
associated to $\chi$; this Higgs field on $L^\chi$ will be denoted by $\theta^\chi$. Since
$L^\chi$ is a line bundle, we have $\theta^\chi\bigwedge(\theta^\chi)^*\,=\, 0$
(Lemma \ref{lem3} is not needed for this). As
$\theta\bigwedge\theta^*$ is a section of
$\psi(z(\mathfrak g))\bigotimes \Omega^{1,1}_X$, from this it can be deduced that
$\theta\bigwedge\theta^*\,=\, 0$. Indeed, given any nonzero element $v\, \in\,
z({\mathfrak g})$, there is a holomorphic character
$$
\chi\, :\, G\, \longrightarrow\, {\mathbb C}^*
$$
such that $d\chi(v)\, \not=\, 0$ (defined in \eqref{eq4}).
\end{proof}

\begin{theorem}\label{thm2}
Let $(E_G\, ,\theta)$ be a polystable Higgs $G$--bundle on $X$, and let $E_{K_G}\,
\subset\, E_G$ be an Hermitian structure that satisfies the Yang--Mills--Higgs
equation for $(E_G\, ,\theta)$. Then the Hermitian
structure $E_{K_G}\, \subset\, E_G$ also satisfies the Yang--Mills--Higgs
equation for $(E_G\, ,0)$. 
\end{theorem}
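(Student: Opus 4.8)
The plan is to mimic the proof of Theorem \ref{thm1}: once the self-interaction term $\theta\bigwedge\theta^*$ is known to vanish, the Yang--Mills--Higgs equation for $(E_G\, ,\theta)$ collapses onto the one for $(E_G\, ,0)$ by direct substitution. The decisive input, namely $\theta\bigwedge\theta^*\,=\,0$, has already been secured in Lemma \ref{lem4}, so what remains is essentially bookkeeping.

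The structural point I would emphasize first is that the connection $\nabla$ entering the equation depends only on the holomorphic structure of $E_G$ together with the reduction $E_{K_G}$, and not on the Higgs field $\theta$. Indeed, by \cite[p. 191--192, Proposition 5]{At}, $\nabla$ is the \emph{unique} $C^\infty$ connection on $E_{K_G}$ whose induced connection on $E_G$ is compatible with the holomorphic structure, and this characterization makes no reference to $\theta$. Consequently the curvature ${\mathcal K}(\nabla)$, and hence $\Lambda_\omega {\mathcal K}(\nabla)$, is unchanged whether we test the reduction $E_{K_G}$ against $(E_G\, ,\theta)$ or against $(E_G\, ,0)$.

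With these two facts in hand the argument is immediate. By hypothesis there is an element $c\,\in\, z(\mathfrak g)$ with $\Lambda_\omega({\mathcal K}(\nabla)+\theta\bigwedge\theta^*)\,=\,\widehat{\psi}(c)$. Applying Lemma \ref{lem4} to annihilate the term $\theta\bigwedge\theta^*$, this reads $\Lambda_\omega {\mathcal K}(\nabla)\,=\,\widehat{\psi}(c)$, which is precisely the Yang--Mills--Higgs equation for $(E_G\, ,0)$, witnessed by the same central element $c$. Hence $E_{K_G}$ is a Yang--Mills--Higgs reduction for $(E_G\, ,0)$, as claimed.

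I do not expect any genuine obstacle at this final stage, since the entire weight of the principal-bundle case is carried by Lemma \ref{lem4}, where the vanishing of $\theta\bigwedge\theta^*$ was extracted from the vector-bundle result (Lemma \ref{lem3}) applied to $(\text{ad}(E_G)\, ,\text{ad}(\theta))$, supplemented by the character argument disposing of the central directions in $z(\mathfrak g)$. The only point worth an explicit remark is that no adjustment of the constant is needed---the same $c$ serves both equations---so the conclusion is a one-line substitution.
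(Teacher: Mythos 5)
Your argument is correct and is exactly the paper's proof: the authors likewise deduce Theorem \ref{thm2} by substituting the conclusion of Lemma \ref{lem4} into the Yang--Mills--Higgs equation for $(E_G\, ,\theta)$, only stated in one line. Your additional observations --- that $\nabla$ is independent of $\theta$ and that the same central element $c$ works --- are accurate elaborations of what the paper leaves implicit.
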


\begin{proof}
In view of the Yang--Mills--Higgs equation for $(E_G\, ,\theta)$,
this follows immediately from Lemma \ref{lem4}.
\end{proof}

\section*{Acknowledgements}

We thank the referee for pointing out several misprints.
I.B.\ is supported by a J.\ C.\ Bose Fellowship.
B.G.O.\ is supported by Project {\sc  ID PRY 6777} of the Pontificia Universidad Javeriana, Bogot\'a.
U.B.\ is supported by {\sc prin} ``Geometry of Algebraic Varieties'' and  {\sc gnsaga-indam}.
A. L. is supported by the {\sc fapesp} post-doctoral grant number 2013/20617-2.

\frenchspacing

\end{document}